\title{On the Dirichlet Problem for First Order Linear Hyperbolic PDEs on Bounded Domains with Mere Inflow Boundary}
\author{Thomas M\"{a}rz\thanks{Zentrum Mathematik,
        Technische Universit\"{a}t M\"{u}nchen,
        Boltzmannstr. 3, 85747 Garching, Germany
        ({\tt maerzt@ma.tum.de}). Manuscript as of \today.}}
\begin{document}

\maketitle

% On the Dirichlet Problem for First Order Linear Hyperbolic PDEs on Bounded Domains with Mere Inflow Boundary
 
\begin{abstract} 
Here we study the Dirichlet problem for first order linear and quasi-linear hyperbolic PDEs on a simply connected bounded domain of $\R^2$, where the domain has an
interior outflow set and a mere inflow boundary. By means of a Lyapunov function we show the existence of a unique solution in the 
space of functions of bounded variation
and its continuous dependence on all the data of the linear problem.
Finally, we conclude the existence of a solution to the quasi-linear case by utilizing the Schauder fixed point theorem.
This type of problems considered here appears in applications such as transport based image inpainting. 
\end{abstract}

\begin{keywords}
Hyperbolic PDEs, Method of Characteristics, Lyapunov Functions, Bounded Variation, Fixed Point Theory 
\end{keywords}

\begin{AMS}
26B30, 35L02, 35A30, 35L67, 37C25, 47H10
\end{AMS}

\section{Introduction}\label{Sect:Intro}
The subject of this paper is hyperbolic linear and quasi-linear scalar PDEs of first order in two space dimensions.
We consider the Dirichlet problem on simply connected bounded domains $\Omega$ and bounded functions $u_0$ of bounded variation as boundary data.
The linear PDE is stated in the space $\BV$ -- the functions of bounded variation -- and formulated as
\begin{align}\label{eqn:LinPDE}
	\SP< {c(x)}, {Du} > &= f(x) \cdot \Lm^2 \quad \mbox{in} \quad \Omega \wo \Sigma\;,  & u|_{\bd \Omega} &= u_0 \;.
\end{align}
Here, $\SP<\dotarg,\dotarg >$ denotes the scalar product in $\R^2$, $Du$ is the derivative measure of $u \in \BV$ and $\Lm^2$ the Lebesgue measure on $\R^2$.

Because the PDE \refEq{eqn:LinPDE} is hyperbolic and our aim is to ensure global existence and uniqueness, we have to rule out the case of characteristic points
and therefor need an additional requirement.
Our additional requirement is the existence of a global Lyapunov function $T:\Omega \to \R$ for the transport field $c$.
That is $c$ is assumed to satisfy the Lyapunov or causality condition
\begin{align}\label{eqn:Causal}
	\SP< {c(x)}, {\nabla T(x)}> \geq \beta \cdot |\nabla T(x)| \;, & \qquad \beta > 0 \;.
\end{align}
Besides that, the Lyapunov function $T$ is zero on the boundary and increases towards its maximal level $\Sigma$.

Since the boundary is a level line of $T$, condition \refEq{eqn:Causal} means in particular that every point of the boundary is an inflow point.
On the other hand PDE \refEq{eqn:LinPDE} is an advection equation, thus we need an outflow set. For our case the outflow set is in the interior of the domain
and is given by $\Sigma$. The outflow sets considered here are connected tree-like sets compactly contained in $\Omega$.

Assuming $c \in \C^1(\Omega \wo \Sigma)^2$, equation \refEq{eqn:LinPDE} is a well-behaved advection equation and thus we will
employ the method of characteristics to construct the solution.
A related problem, the Cauchy problem for quasi-linear equations in conservative form
\begin{align}\label{eqn:Conserve}
	\pd_t u(t,x) + \pd_x a(t,x,u(t,x)) &= 0 \;, & u(0,x) = u_0(x) \;,
\end{align}
with bounded data $u_0$ of locally of bounded variation, has already been studied, see e.g. \cite{Volpert67} and \cite{ConwaySmoller66}.
Here, the non-linearity of $a(t,x,u)$ w.r.t. the variable $u$ causes the solution to develop shocks within finite time, which in turn makes a notion of generalized solution necessary.
In \cite{Volpert67}, the author proves the existence and uniqueness, under some suitable assumptions on the jump discontinuities, in $\BV \cap \L^{\infty}$ employing the method
of vanishing viscosity.

Our problem is slightly different. The set $\Sigma$ is globally attractive by the causality condition \refEq{eqn:Causal} and thus characteristics will meet there.
Because equation \refEq{eqn:LinPDE} is a well-behaved advection, the solution cannot develop shocks before the characteristics reach the set $\Sigma$
but we will in general obtain shocks on $\Sigma$. This is the case even for smooth boundary data.

In this paper we will show existence, uniqueness and stability of the solution to problem \refEq{eqn:LinPDE} in the space $\BV(\Omega) \cap L^{\infty}(\Omega)$.

An important feature of our solution is that it is defined on all of $\Omega$. That means, we obtain a description of the shocks at $\Sigma$ in the $\BV$-framework
and so, in this sense, we can close the gap:  $u$ solves problem \refEq{eqn:LinPDE} in $\Omega \wo \Sigma$, but $u \in \BV(\Omega)$ and not only $u \in \BV(\Omega\wo\Sigma)$.

The benefit of ''closing the gap'' is the good functional analytical properties of $\BV(\Omega)$, in particular compactness, which we will exploit to tackle the quasi-linear case: 
\begin{align}\label{eqn:QuasiLinPDE}
	\SP< {c[u](x)\,} , {D u} > &= f[u](x) \cdot \Lm^2  \qquad \mbox{in} \quad \Omega \wo \Sigma \;, \qquad & u|_{\bd \Omega} &= u_0 \;.
\end{align}
In contrast to equation \refEq{eqn:Conserve}, we consider quasi-linear problems in non-conservative form but we allow the dependence on $u$ of the transport field $c$ 
and the right-hand side $f$ to be of a general functional type.
This means that they depend not only on the value $u(x)$ but on the whole function, thus the coefficients of the PDE \refEq{eqn:QuasiLinPDE} are maps 
\begin{equation}
	\begin{aligned}
		f: \FX \to \FR_1 &\; , \; u \to f[u] \;, & \qquad  f&[\dotarg](x): \FX \to \R^{~}  \; , \; u \to f[u](x) \;, \\
		c: \FX \to \FR_2 & \; , \; u \to c[u] \;, & \qquad c&[\dotarg](x): \FX \to \R^2  \;, \; u \to c[u](x) \;, 
	\end{aligned}
\end{equation}
with $\FX \subset \BV(\Omega)$ and $\FR_1$, $\FR_2$ being subsets of suitable function spaces defined on $\Omega \wo \Sigma$.

We will approach the quasi-linear problem \refEq{eqn:QuasiLinPDE} by using the theory of the linear problem. 
Fixing the functional argument of the coefficients by some $v \in \FX$, we obtain the linear PDE
\begin{align}
	\SP< {c[v](x) \,},{D u} > &= f[v](x) \cdot \Lm^2  \qquad \mbox{in} \quad \Omega \wo \Sigma \;, \qquad &
	u|_{\bd \Omega} &= u_0 \;,
\end{align}
with a solution $U[v] \in \FX$ depending on $v$. If now the operator $U$ admits a fixed point $u = U[u]$, we obtain a solution to problem \refEq{eqn:QuasiLinPDE}.

The compactness mentioned above and the stability properties of the linear case allow us to establish the existence of a solution $u = U[u]$ by employing the Schauder fixed point theorem.

An application of the quasi-linear equation \refEq{eqn:QuasiLinPDE} is transport based image inpainting. 
The term image inpainting means the retouching of undesired or damaged portions of a digital image.
Telea suggested in \cite{Telea04} a fast algorithm for digital image inpainting which is however not adapted to the image under consideration. 

In \cite{FBTM07}, we have analyzed Telea's algorithm in a continuous setting and have seen that it is consistent with the following model
\begin{align*}
	\SP< {\nabla T(x)}, {\nabla u} > &= 0  \quad \mbox{in} \quad \Omega \wo \Sigma\;,  & u|_{\bd \Omega} &= u_0 \;.
\end{align*}
Moreover, we have generalized this linear model to problem \refEq{eqn:LinPDE} with side condition \refEq{eqn:Causal} and finally we adapted the coefficients of the PDE
to the image which resulted in a quasi-linear model of type \refEq{eqn:QuasiLinPDE}.
With our image adaption we obtained an image inpainting method which produces results of high visual quality and performs almost as fast as Telea's algorithm.

In \cite{FBTM07}, we left open the question for the well-posedness of our inpainting model. This question is positively answered in \cite{MyDiss}  as an application of
the theory presented in this paper.  
\medskip

\paragraph{Outline of the Paper}
Section \ref{Sect:Problem} is devoted to the full description of the linear problem \refEq{eqn:LinPDE} and its requirements.
The considered outflow sets $\Sigma$ are connected tree-like sets compactly contained in $\Omega$, thus
the biggest part of the requirements is concerned with the complicated structure of $\Sigma$ in order to define Lyapunov functions $T$ and transport fields $c$ which are
well-behaved close to and on the outflow set.
In section \ref{Sect:Coord} we discuss the non-linear coordinate system induced by the characteristics, in particular the features which are crucial for integral transformations later on.
In section \ref{Sect:LPExist}, we construct a solution $u \in \BV(\Omega)$ by using the method of characteristics and derive a priori bounds on the norm $\|u\|_{\BV(\Omega)}$ which depend only the data of the problem
and features of the Lyapunov function $T$. The main result is theorem \ref{Theo:ElemBV}.
Section \ref{Sect:Stab} is concerned with the uniqueness and the continuous dependence of $u$ on the data. Special attention is given to the non-linear dependence of the solution on the transport field $c$
and on the Lyapunov function $T$. The main result is theorem \ref{Theo:ContDepend}.
In section \ref{Sect:QuasiLin} we turn to the quasi-linear problem \refEq{eqn:QuasiLinPDE}. Exploiting the theory of the linear problem, in particular the continuous dependence on the data
and that we can close the gap as $u \in \BV(\Omega)$, we conclude the existence of a solution by the Schauder fixed point theorem.

\section{The Linear Problem and its Requirements}\label{Sect:Problem} % Requirements of the Linear Problem
The purpose of this section is to summarize all the requirements on the data of the linear problem:
\begin{equation}\label{eqn:LinProblem}
	\begin{aligned}
		 \SP< {c(x)}, {Du} > &= f(x) \cdot \Lm^2 \quad \mbox{ in } \quad \Omega \wo \Sigma  \;,  \\
		 u|_{\bd \Omega} &= u_0   \;, \\ 
		 \SP< {c(x)}, {\nabla T(x)} > & \geq \beta \cdot |\nabla T(x)|  \;.
	\end{aligned}
\end{equation}
The first set of requirements is concerned with the domain $\Omega$, the outflow set $\Sigma$ and the Lyapunov function $T$.
Later, in section \ref{Sect:LPExist}, we will construct the solution by the method of characteristics. For the characteristics we will see that
their time variable can be identified with the values of the Lyapunov function $T$ and that the outflow set $\Sigma$ is the location where
characteristics end. For these reasons, we call the Lyapunov function $T$ time function and the outflow set $\Sigma$ stop set.
The second set of requirements is concerned with the transport field $c$, the right-hand side $f$ and the boundary data $u_0$.
\medskip

\begin{req}\label{Req:Domain}(Domains)
	Domains $\Omega \subset \R^2$ are assumed to
	be open, bounded and simply connected
	and to have $\C^1$ boundary.
\end{req}
\medskip

Because of requirement \ref{Req:Domain} the boundary $\bd \Omega$ of a domain is a simple closed $\C^1$ curve. 
By $\gamma:\R \to \bd \Omega$  we denote  a generic regular and periodic parametrization of $\bd \Omega$.
That means $\gamma \in \C^1(\R,\bd \Omega)$ is surjective and $\gamma'(s) \neq 0 \; \forall s \in \R$. 
Furthermore, by $I=\IV[a,b[ \subset \R$ we denote an interval such that $\gamma|_I$ is a generator of $\gamma$.

Next, we collect the properties of admissible time functions.
For our problem time functions are global Lyapunov functions whose range corresponds to a finite time interval.
As time is usually a positive quantity which increases, we define the stop set to be the maximal level of $T$
while in literature the stop set is often the minimal level, see e.g. \cite{Amann:1990:ODE}.
\medskip

\begin{req}\label{Req:TimeFunc}
	(Time functions)
	Time functions are of type \(T: \Omega \to \R \). The upper level-sets of $T$ are denoted by
	\[
		\chi_{T \geq \lambda} := \{x \in \Omega : T(x) \geq \lambda\} \;.
	\]	
	We assume that time functions $T$ satisfy the following conditions:
	\smallskip
	\begin{enumerate}[1.]
		\item $T \in \C(\cl{\Omega})$.
			\smallskip
		\item The boundary of $\Omega$ is the start level: $T|_{\bd \Omega} = 0$. 
			\smallskip
		\item $T$ incorporates a stop set $\Sigma$ with stop time $1$:
			\begin{enumerate}[a)]
				\item $T(x)<1 \Leftrightarrow x \in \cl{\Omega} \wo \Sigma$.
				\item $T|_{\Sigma} = 1$, i.e., $\Sigma$ is the maximal level of $T$. 
			\end{enumerate}
			\smallskip
		\item $T$ increases strictly from $\bd \Omega$ towards $\Sigma$. That means that any upper level-set $\chi_{T \geq \lambda}$ is simply connected and
			\[
				\chi_{T \geq \lambda} = \cl{ \chi_{T > \lambda} } \qquad \forall \; \lambda \in \IV[0,1[ \;.
			\]
		\item Any proper upper level-set is a future domain: for every $\lambda \in [0,1[ $ the set $\chi_{T > \lambda}$ satisfies requirement \ref{Req:Domain}.	
			Furthermore, the field of interior unit normals to the $\lambda$-levels 
			\[
				\chi_{T=\lambda} = \bd \chi_{T > \lambda} \;, \qquad \lambda \in \IV[0,1[ 
			\]
			of $T$ is denoted by $N: \Omega \wo \Sigma \to S^1$. $N$ is required to be continuously differentiable and extendable onto $\bd \Omega$, i.e., $N \in \C^1(\cl{\Omega} \wo \Sigma)$. 
			\smallskip 
		\item[6.*] $T \in \C^2(\cl{\Omega})$, with $\nabla T (x)=0 \Leftrightarrow x \in \Sigma$.
	\end{enumerate}
\end{req}
\medskip

Remark on 6.*: this assumption is in order to ease things in the passages that follow. 
Because of 6.*, we obtain a simple description of the field $N$ on $\Omega \wo \Sigma$ by $	N(x) = \nabla T(x)/|\nabla T(x)|$ and
$N$ is continuously differentiable and extendable onto $\bd \Omega$.
\medskip

In part 3 of requirement \ref{Req:TimeFunc} we have assumed that $T$ features a stop set $\Sigma$. Here we state the geometric properties of allowed stop sets. 
\medskip

\begin{req}\label{Req:StopSet}
	(Stop sets)
	We assume that the stop sets $\Sigma$ satisfy the following conditions:
	\begin{enumerate}[1.]
		\item $\Sigma$ is a closed subset of  $\Omega$.
		\item $\Sigma$ is either an isolated point, or a connected set with tree-like structure (no loops).
		\item If $\Sigma$ is not an isolated point, it consists of finitely many rectifiable $\C^1$ arcs $\Sigma_k$:
			\[
				\Sigma = \bigcup\limits_{k=1}^{n} \Sigma_k \;.
			\]
			The collection $\{ \Sigma_k\}_{ k=1,\ldots,n}$ is assumed to be minimal in the number $n$ of arcs, so $\Sigma$ is decomposed by
			breaking it up at corners and branching points.
			\smallskip
			
			Furthermore, we require for each arc $\Sigma_k$ that its relative interior $\inner{\Sigma}_k$ has a given orientation by a 
			continuous unit normal $n_k : \inner{\Sigma}_k \to S^1$.
	\end{enumerate}
\end{req}
\medskip

In the case in which $\Sigma$ is not just an isolated point, we also need good behavior of the maps $T$ and $N$ close to and on the stop set $\Sigma$. 
For this purpose, we use the following concept of one-sided limits towards $z \in \inner{\Sigma}_k$:

If $x \in B_r(z) \wo \Sigma$ and if $r>0$ is sufficiently small, the projection $p$ of the point $x$ is unique. 
In view of this feature we say a point $x \in B_r(z) \wo \Sigma$
is on the right-hand side or plus side (respectively, on the left-hand or minus side) of $\inner{\Sigma}_k$ if
\begin{equation}
	\frac{x-p}{|x-p|} = +n_k(p)  \qquad \left( \frac{x-p}{|x-p|} = -n_k(p) \right) \;.
\end{equation}
Therewith, a sequence $(x_n)_{n \in \N}$, $x_n \in \Omega \wo \Sigma$, tends to $ z \in \inner{\Sigma}_k $ coming from the plus side 
(respectively, the minus side), in symbols
\begin{equation}
	x_n \to z_+ \qquad \left( x_n \to z_- \right) \;,
\end{equation}
if the sequence converges towards $z$ and almost all elements $x_n$ are on the plus side (respectively, minus side).

With the concept of one-sided limits, the good behavior of the maps $T$ and $N$ is summarized in the following requirement.
\medskip

\begin{req}\label{Req:TimeFuncSigma}	
	(Good behavior at $\Sigma$)
	\begin{enumerate}[1.]
		\item Requirements on $T$:
						
			Let $y \in \Sigma$ and $h \in S^1$ . Let $p=p(y,h)$ be the best possible order for the asymptotic formula
			\[
				T(y + r h) = 1 - \O(r^p) \; ,\quad r \to 0_+ \;.
			\]
			We require that there is a bound $q$ such that $\; \sup\limits_{y \in \Sigma} \sup\limits_{|h|=1} p(y,h) < q$. 
		\item Requirements on $N$:
			\begin{enumerate}[a)]
				\item $N$ has one-sided extensions onto the relatively open components $\inner{\Sigma}_k$ and those extensions are given by $\pm n_k$:
					\begin{align*}
						N^+(y) &\; := \lim\limits_{x \to y_+} N(x) \;, & N^+(y) &= -n_k(y) \;, \\
						N^-(y) &\; :=  \lim\limits_{x \to y_-} N(x) \;, & N^-(y) &= n_k(y) 
					\end{align*}
					for every  $ y \in \inner{\Sigma}_k$. 
					\smallskip
				\item The derivative $DN$ has one-sided extensions onto the relatively open components $\inner{\Sigma}_k$, i.e.,
					\begin{align*}
						(DN)^+(y) &\; := \lim\limits_{x \to y_+} DN(x) \;, &
						(DN)^-(y) &\; :=  \lim\limits_{x \to y_-} DN(x)  
					\end{align*}
					exist for every $y \in \inner{\Sigma}_k$.
					\smallskip
				\item $|DN| \in \L^1(\Omega)$, i.e., poles of $|DN|$ at corner-, branching- and terminal nodes of $\Sigma$ are integrable.
					This feature is assumed to hold in the case in which $\Sigma$ is an isolated point as well.
			\end{enumerate}
	\end{enumerate}
\end{req}
\medskip

So far we have the requirements on domains, stop sets, and time functions. Now we turn to the assumptions on transport fields. 
Here, for a given time function $T$ with stop set $\Sigma$, the causality of the transport field w.r.t. $T$ and its good behavior close to $\Sigma$ are the main concern.
\medskip

\begin{req}\label{Req:TransportField}
	(Transport fields)
	Transport fields of type $c:\Omega \wo \Sigma \to \R^2$ are required to satisfy:
	\medskip
	
	\begin{enumerate}[1.]
		\item $c \in \C^1(\Omega \wo \Sigma)^2$ and $c$ features the following properties:
			\begin{enumerate}[a)]
				\item $c$ and $Dc$ are continuously extendable onto $\bd \Omega $.
				\item If $\Sigma$ is not just an isolated point, then $c$ and $Dc$ have one-sided limits on the relatively open $C^1$ arcs
					$\inner{\Sigma}_k$ of $\Sigma$:
					\begin{align*}
						c^+(y) = & \; \lim\limits_{x \to y_+} c(x) \quad \mbox{and} \quad c^-(y) = \lim\limits_{x \to y_-} c(x) \; , \\
						(Dc)^+(y) = & \; \lim\limits_{x \to y_+} Dc(x) \quad \mbox{and} \quad (Dc)^-(y) = \lim\limits_{x \to y_-} Dc(x) \; ,
					\end{align*}
					for every $y \in \inner{\Sigma}_k$.
			\end{enumerate}
			\medskip
		\item Unit speed and causality condition:
			\begin{enumerate}[a)]
				\item $|c| = 1$.
				\item There is a lower bound $\beta > 0$ such that
					\[ 
						\beta \leq \SP< {c(x)}, {N(x)} > \leq 1 \qquad \forall x \in \cl{\Omega} \wo \Sigma \:.
					\]
				\item Conditions a) and b) hold for the one-sided limits as well, i.e.,
					\[
						|c^+(y)| = |c^-(y)| = 1
					\]
					and
					\[ 
						\beta \leq \SP< {c^+(y)}, {N^+(y)} > \leq 1 \quad , \quad \beta \leq \SP< {c^-(y)}, {N^-(y)} > \leq 1 \; , 
					\] 
					whenever $y$ belongs to some  $\inner{\Sigma}_k$.
			\end{enumerate}
			\medskip
		\item Let $z_k$, $k \in \{1, \ldots, m\}$ denote the terminal-, branching- and kink nodes of $\Sigma$. 			
			For every $\varepsilon > 0$ such that each disk $B_{\varepsilon}(z_k)$
			is compactly contained in $\Omega$, we define the set
			\[
				V_{\varepsilon} := \Sigma \; \cup \;  \bigcup\limits_{k=1}^m \cl{ B_{\varepsilon}(z_k) } \;.
			\]
			\begin{enumerate}[a)]
				\item For every admissible $\varepsilon > 0$, there is a bound $M_{\varepsilon}$ such that
					\[
						|Dc(x)| \leq M_{\varepsilon} \quad , \quad \forall \; x \in \Omega \wo V_{\varepsilon}
					\]
				\item $|Dc| \in \L^1(\Omega)$, i.e.,  poles of $|Dc|$ at $z_k$, $k \in \{1, \ldots, n\}$ are integrable.		
			\end{enumerate}
			\medskip
	\end{enumerate}
\end{req}
\medskip

Finally we need a right-hand side and boundary data.
\medskip

\begin{req}\label{Req:RHSandData}
	(Right-hand sides and boundary data)
	We assume right-hand sides $f: \Omega \to \R$ and Dirichlet boundary data $u_0 : \bd \Omega \to \R$ to be $f \in C^1(\cl{\Omega})$ and $u_0 \in \BV(\bd \Omega)$.
\end{req}
\smallskip

Remark: with $u_0 \in \BV(\bd \Omega)$ we mean that for every regular periodic parametri\-zation $\gamma$ of $\bd \Omega$, the pull-back $\gamma^* u_0 = u_0 \circ \gamma$ is a periodic $\BV$ function.
Moreover, because $\bd \Omega$ is one-dimensional, $\gamma^* u_0$ is $\BV$ function of one variable. Therefor the boundary data is essentially bounded.

In the following sections we assume, if not explicitly stated otherwise, that domains $\Omega$, stop sets $\Sigma$, time functions $T$, transport fields $c$, right-hand sides $f$, and boundary data $u_0$
satisfy the requirements above.
\medskip

\section{A Customized Coordinate System}\label{Sect:Coord}
The requirements of the previous section in combination are such that the family of characteristics which is the solution of the initial value problem
\begin{align*}
	y' &= c(y)\;, & y(0) &= x \in \Omega \wo \Sigma \;,
\end{align*}
gives us a customized coordinate system for problem \refEq{eqn:LinProblem}.
The subject of this section is the features of this non-linear coordinate system.
The first lemma is about the boundedness of the arc-length of characteristic curves. This property is important because the $\|.\|_{\BV}$-norm bound on the solution of \refEq{eqn:LinProblem} depends on it.
\medskip

\begin{lem}\label{Lem:ArcLenBound}
	\begin{enumerate}[a)]
		\item Let $q$ be the bound from requirement \ref{Req:TimeFuncSigma} part 1 , let $\varphi (t) := -t^{\frac{1}{q}}$ and let 
			\begin{equation} \label{eqn:TrafoT} 
				T_0(x) := 1+\varphi (1-T(x))  \; .
			\end{equation}
			Then the gradient $\nabla T_0$ of the transformed time function blows up at $\Sigma$ and is bounded below
			\[ 
				|\nabla T_0(x)| \geq m_0 > 0 \qquad \forall \; x \in \Omega \wo \Sigma \;.
			\]
			\smallskip
		\item For every regular $\C^1$ curve $x: \IV[0,a[ \to \cl{\Omega} \wo \Sigma$  ($a = \infty$ admissible)  that
			satisfies the condition 
			\begin{equation} \label{eqn:Ang1}
				0 < \beta \leq \left< N(x(\tau)), \frac{x'(\tau)}{|x'(\tau)|} \right> \:, \qquad \forall \tau \in \IV[0,a[ \; ,
			\end{equation}
			the arc length of $x$ is uniformly bounded by
			\begin{equation} \label{eqn:ArcLenBound} 
				\arclength(x) \leq  \frac{1}{\beta \cdot m_0}\:.
			\end{equation}
	\end{enumerate}
\end{lem}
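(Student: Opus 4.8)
\medskip

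The plan is to treat the two parts separately, with part (a) feeding directly into part (b). For part (a), the idea is simply to differentiate the defining relation \refEq{eqn:TrafoT}. Writing $\psi(\lambda) := 1 + \varphi(1-\lambda) = 1 - (1-\lambda)^{1/q}$, we have $T_0 = \psi \circ T$ and hence $\nabla T_0(x) = \psi'(T(x))\,\nabla T(x) = \tfrac1q (1-T(x))^{\frac1q - 1}\nabla T(x)$. Since $q > 1$ (it is an upper bound for asymptotic orders $p \geq 1$), the scalar factor $(1-T(x))^{1/q-1}$ tends to $+\infty$ as $T(x) \to 1$, i.e. as $x \to \Sigma$, which gives the blow-up claim. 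For the lower bound, the key point is requirement \ref{Req:TimeFuncSigma} part 1: near any $y \in \Sigma$ one has $T(y+rh) = 1 - \O(r^{p(y,h)})$ with $p(y,h) < q$, so $1 - T(x) \lesssim \dist(x,\Sigma)^{p}$ with $p<q$, while $|\nabla T|$ itself does not decay too fast relative to this. More cleanly: I expect one argues that $x \mapsto |\nabla T_0(x)|$ is continuous on $\Omega\wo\Sigma$, strictly positive there (since $\nabla T \neq 0$ off $\Sigma$ by 6.* and $\psi' > 0$), extends continuously and positively to $\bd\Omega$, and — using the asymptotic order bound — has a positive liminf as $x \to \Sigma$; compactness of $\cl\Omega$ then yields the uniform lower bound $m_0 > 0$.

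\medskip

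For part (b), the strategy is the change of variables $\tau \mapsto \lambda := T_0(x(\tau))$ along the curve. Along any curve satisfying \refEq{eqn:Ang1}, I compute
\[
	\frac{d}{d\tau}\,T_0(x(\tau)) = \bigl\langle \nabla T_0(x(\tau)), x'(\tau)\bigr\rangle = \psi'(T(x(\tau)))\,|\nabla T(x(\tau))|\,\bigl\langle N(x(\tau)), x'(\tau)\bigr\rangle,
\]
using $N = \nabla T/|\nabla T|$. By \refEq{eqn:Ang1} this is $\geq \beta\,|x'(\tau)|\cdot|\nabla T_0(x(\tau))| \geq \beta\, m_0\,|x'(\tau)|$ by part (a). Hence $t \mapsto T_0(x(t))$ is strictly increasing, so $\arclength(x|_{[0,t]}) = \int_0^t |x'(\tau)|\,d\tau \leq \frac{1}{\beta m_0}\int_0^t \frac{d}{d\tau}T_0(x(\tau))\,d\tau = \frac{1}{\beta m_0}\bigl(T_0(x(t)) - T_0(x(0))\bigr)$. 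Since $T_0 = \psi\circ T$ maps $\cl\Omega\wo\Sigma$ into $[0,1)$ (because $\psi$ is increasing on $[0,1]$ with $\psi(0)=0$, $\psi(1)=1$, and $T<1$ off $\Sigma$), the right-hand side is bounded by $\frac{1}{\beta m_0}$ uniformly in $t$; letting $t \to a$ gives \refEq{eqn:ArcLenBound}.

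\medskip

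The main obstacle I anticipate is making the lower bound $|\nabla T_0| \geq m_0$ in part (a) fully rigorous near $\Sigma$, since this is exactly where $\nabla T$ degenerates and the competing blow-up of $\psi'$ must be shown to win. The delicate quantitative input is precisely requirement \ref{Req:TimeFuncSigma} part 1 — the uniform strict bound $\sup_{y\in\Sigma}\sup_{|h|=1} p(y,h) < q$ — which must be converted from a statement about radial asymptotics of $T$ into a lower bound on $|\nabla T|$ relative to powers of $\dist(\dotarg,\Sigma)$; some care is needed because the naive derivative of an $\O(r^p)$ asymptotic is not automatically $\O(r^{p-1})$ without extra regularity, so one likely exploits the $\C^2$ assumption 6.* together with the geometry of the level sets. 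Everything in part (b) is then routine: it is a one-line monotone substitution once part (a) is in hand. I would therefore expect the bulk of the author's effort to go into the $\Sigma$-asymptotics in part (a), with the arc-length estimate following almost formally.
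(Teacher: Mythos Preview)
Your proposal is correct and follows essentially the same route as the paper. One clarification on part~(a): the blow-up of $|\nabla T_0|$ at $\Sigma$ does \emph{not} follow from the divergence of the scalar factor $(1-T)^{1/q-1}$ alone, since $|\nabla T|\to 0$ there competes against it; the asymptotic balance you flag as the ``main obstacle'' for the lower bound is already needed for the blow-up claim itself. The paper handles both at once: writing $|\nabla T_0(y+rh)| = \tfrac{1}{q}(1-T(y+rh))^{(1-q)/q}\,|\nabla T(y+rh)|$ and using requirement~\ref{Req:TimeFuncSigma}.1 together with $T\in\C^2$, $\nabla T|_\Sigma=0$ to get $1-T(y+rh)\sim r^{p}$ and $|\nabla T(y+rh)|\sim r^{p-1}$ with the same exponent $p<q$, one obtains $|\nabla T_0(y+rh)|\sim r^{(p-q)/q}\to\infty$. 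Once this blow-up is established, the lower bound $m_0>0$ follows by exactly the compactness argument you describe (the paper phrases it by contradiction). Your part~(b) is identical to the paper's.
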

\begin{proof} ~\smallskip
	\begin{enumerate}[a)]
		\item The function $T_0$ is well-defined since $0 \leq T \leq 1$ and its derivative  is
			\[
				\nabla T_0(x) = \varphi' (1 -T(x)) \cdot (-\nabla T(x)) =: H(x) \cdot \frac{\nabla T(x)}{|\nabla T(x)|} \; ,
			\]
			with
			\[
				H(x) :=  \frac{1}{q} (1 -T(x))^{\frac{1-q}{q}} \cdot |\nabla T(x)| > 0 \;, \qquad x \in \Omega \wo \Sigma \;.
			\]
			Let $y \in \Sigma$, $h \in S^1$ and $r>0$. Then, by requirement \ref{Req:TimeFuncSigma} part 1 we have
			\begin{align*}
				1-T(y+rh) &= C_1 r^p\;, & p &=p(y,h) \; , \; C_1 > 0 \;.
			\end{align*}
			Because $T \in C^2(\Omega)$ and $\nabla T|_{\Sigma} = 0$ we have 
			\begin{align*}
				|\nabla T(y+rh)| &= C_2 r^{p-1} \;,
			\end{align*}
			with the same $p$ as before.
			Combining these asymptotic formulae, we obtain
			\[
				H(y+rh) = C_3  r^{\frac{p(1-q)}{q}} r^{p-1} = C_3 r^{\frac{p-q}{q}} \:.
			\]
			Since, by requirement \ref{Req:TimeFuncSigma} part 1, $q > p(y,h)$ holds uniformly, we get a blow-up
			\[
				\lim\limits_{r \to 0_+} |\nabla T_0(y+rh)| = \lim\limits_{r \to 0_+} H(y+rh) = \infty \:, 
			\]
			for any choice of $h \in S^1$ and every $y \in \Sigma$.
			
			We show next that $|\nabla T_0| \geq m_0 > 0$.  
			Assume by contradiction that
			\[
				\inf\limits_{x \in \cl{\Omega} \wo \Sigma } H(x) = \inf\limits_{x \in \cl{\Omega} \wo \Sigma } |\nabla T_0(x)| = 0 \: .
			\]
			and choose an open neighborhood $U$ of $\Sigma$ such that $H|_U \geq M$,
			for some constant $M > 0$, which is possible because of the blow-up. Then the restriction onto the compact complement 
			$\hat{H} = H|_{\cl{\Omega} \wo U}$, 
			being a continuous function,  must take the minimum
			\[
				\min\limits_{x \in \cl{\Omega} \wo U } \hat{H}(x) = 0 
			\]
			at some point $\hat{x} \in \cl{\Omega} \wo U$. But then, the definition of $H$ implies
			\[
				\hat{H}(\hat{x}) = H(\hat{x}) = 0 \quad \Rightarrow \quad |\nabla T(\hat{x})| = 0 \:,
			\]
			which is a contradiction, since $\hat{x} \notin \Sigma$. Thus, $H=|\nabla T_0|$ has a minimum $m_0$ which is greater than zero.
		\item Using $T_0$ we estimate the arc length from above by
			\begin{align*} 
				1 \geq T_0(x(t))-T_0(x(0)) &= \int\limits_{0}^{t} \SP< {\nabla T_0(x(\tau))} , {x'(\tau)} > \: d\tau \\
				&= \int\limits_{0}^{t} \SP< {N(x(\tau))} , {\frac{x'(\tau)}{|x'(\tau)|}}  > \cdot |\nabla T_0(x(\tau))|  \cdot |x'(\tau)|\: d\tau \\
				&\geq  \beta \int\limits_{0}^{t} |\nabla T_0(x(\tau))| \cdot |x'(\tau)|\: d\tau  \; \geq \; \beta \cdot m_0 \int\limits_{0}^{t}  |x'(\tau)|\: d\tau \: .
			\end{align*} 
			The limit $t \to a$ finally yields the uniform bound on the arc length of such curves 
			\[
				\arclength(x) \leq \frac{1}{\beta \cdot m_0}  \; ,
			\]
			which depends only on $\beta$ and information from $T$. 
	\end{enumerate}
	\hfill
\end{proof}
\smallskip

Because of its nice properties, the transformed version $T_0$ defined by equation \refEq{eqn:TrafoT} will be identified -- instead of $T$ --
with the time variable of the characteristics. This is the subject of the next lemma.
Whenever we speak about $T_0$ we mean this transformed version of a given time function $T$.
\medskip

\begin{lem} \label{Lem:RadialTrafo}
	\begin{enumerate}[a)]
		\item The initial value problem
			\begin{align}
				y' &= c(y) \;, & y(0) &= x \in \Omega \wo \Sigma \; , \label{eqn:IVPfor}
			\end{align}
			has a unique maximally continued solution  $y: \IV]{t_-},{t_+}[ \to \R^2 $, with $ -\infty  < t_- < 0 < t_+ < \infty$.
			\smallskip
	
			Every trajectory $y$ connects the sets $\bd \Omega$ and $\Sigma$, i.e.,
			\begin{align*}
				\lim\limits_{t \to t_-} y(t) &\in \bd \Omega \;,&  \lim\limits_{t \to t_+} y(t) &\in \Sigma \; .
			\end{align*}
			For every point $z \in \inner{\Sigma}_k$ in the relative interior of some $C^1$ arc of $\Sigma$, there are exactly two
			trajectories which hit $z$ in the limit $t \to t_+$, one for each side of $\inner{\Sigma}_k$.
			\medskip
		\item The solution $y_0$ of the velocity-transformed IVP
			\begin{align}
				y' &= c_0(y) \;, & y(0) &= x \in \Omega \wo \Sigma \; , & c_0 &:= \frac{c}{\SP<{c}, {\nabla T_0} >} \;, \label{eqn:IVPtrans}
			\end{align}
			satisfies $\; T_0( y_0 (t) ) = t + T_0(x)$.
	\end{enumerate}
\end{lem}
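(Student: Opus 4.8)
The plan is to handle the two parts separately, since (a) is the substantial existence/geometry statement and (b) is essentially a one-line computation once (a) is in place.

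\emph{Part (a).} First I would invoke the standard Picard--Lindelöf theory: since $c \in \C^1(\Omega \wo \Sigma)^2$ with $|c| = 1$, the IVP \refEq{eqn:IVPfor} has a unique maximally continued solution $y$ on an open interval $\usIV]{t_-}{t_+}[$, and since $|y'| = |c| = 1$ the trajectory is parametrized by arc length. The crux is to show $t_\pm$ are finite and to identify the limits. For the forward direction: the curve $y$ satisfies the angle condition \refEq{eqn:Ang1} because $\SP<{N(y(\tau))},{y'(\tau)/|y'(\tau)|}> = \SP<{N(y(\tau))},{c(y(\tau))}> \geq \beta$ by requirement \ref{Req:TransportField} part 2b). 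Hence Lemma~\ref{Lem:ArcLenBound}b) applies and gives $\arclength(y|_{[0,t_+)}) \leq 1/(\beta m_0)$; since $y$ has unit speed this forces $t_+ \leq 1/(\beta m_0) < \infty$, and similarly $t_- \geq -1/(\beta m_0)$ after reparametrizing backwards (the backward curve also satisfies the angle condition). Finite escape time of a maximally continued trajectory in the open set $\Omega \wo \Sigma$ means the trajectory must accumulate on the boundary $\bd(\Omega\wo\Sigma) = \bd\Omega \cup \Sigma$. To pin down which piece: I would use the time function $T_0$ as a Lyapunov function along $y$. From the computation in the proof of Lemma~\ref{Lem:ArcLenBound}b), $\frac{d}{dt} T_0(y(t)) = \SP<{\nabla T_0(y(t))},{c(y(t))}> \geq \beta m_0 > 0$, so $T_0\circ y$ is strictly increasing; since $T_0 = 1 + \varphi(1-T)$ is a continuous strictly increasing function of $T$ on $[0,1]$ with $T_0 = 0$ on $\bd\Omega$ and $T_0 = 1$ on $\Sigma$, monotonicity of $T_0\circ y$ rules out accumulation on $\bd\Omega$ as $t\to t_+$ and accumulation on $\Sigma$ as $t\to t_-$. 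Combined with finiteness of $t_\pm$ and the fact that $\cl\Omega$ is compact, this yields $\lim_{t\to t_+} y(t) \in \Sigma$ and $\lim_{t\to t_-} y(t) \in \bd\Omega$ (uniqueness of the limit point also follows from unit speed plus the finite arc-length bound, which forces a genuine limit rather than mere accumulation).

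\emph{Count of trajectories hitting $z \in \inner\Sigma_k$.} For the final assertion, I would run the backward flow from a point $z$ in the relative interior of an arc. Approaching $z$ strictly from the plus side, requirement \ref{Req:TransportField} part 2c) gives a well-defined one-sided limit $c^+(z)$ with $\SP<{c^+(z)},{N^+(z)}> = \SP<{c^+(z)},{-n_k(z)}> \geq \beta > 0$; in particular $c^+(z)$ points strictly off the plus side of the arc, so the backward trajectory starting at $z_+$ is uniquely determined and stays on the plus side near $z$. Symmetrically there is exactly one trajectory reaching $z$ from the minus side. No trajectory can reach $z$ ``along'' the arc or from both sides, again because the transversality $\SP<{c^\pm},{N^\pm}> \geq \beta$ prevents tangential approach. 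Hence exactly two trajectories hit $z$, one per side. (Here I would be slightly careful to phrase this in terms of the reversed-time ODE with the one-sided vector field, invoking the one-sided $\C^1$ regularity of $c$ from requirement \ref{Req:TransportField} part 1b) so that Picard--Lindelöf applies to each half.)

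\emph{Part (b).} The vector field $c_0 = c/\SP<{c},{\nabla T_0}>$ is well-defined and $\C^1$ on $\Omega\wo\Sigma$ because the denominator $\SP<{c},{\nabla T_0}> = H \cdot \SP<{c},{N}> \geq m_0 \beta > 0$ by Lemma~\ref{Lem:ArcLenBound}a) and requirement \ref{Req:TransportField} part 2b); so its IVP has a unique solution $y_0$. Then
\[
	\frac{d}{dt} T_0(y_0(t)) = \SP<{\nabla T_0(y_0(t))},{y_0'(t)}> = \SP<{\nabla T_0(y_0(t))},{c_0(y_0(t))}> = \frac{\SP<{\nabla T_0(y_0(t))},{c(y_0(t))}>}{\SP<{c(y_0(t))},{\nabla T_0(y_0(t))}>} = 1,
\]
and integrating from $0$ to $t$ with $y_0(0) = x$ gives $T_0(y_0(t)) = t + T_0(x)$, as claimed.

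\emph{Expected main obstacle.} The routine ODE parts are painless; the delicate point is the rigorous identification of the limit points and, in particular, the ``exactly two trajectories'' claim at $z \in \inner\Sigma_k$ — one must argue carefully that the backward flow from the one-sided limit vector field genuinely stays on its side of the arc in a full one-sided neighborhood (not just infinitesimally) and that no other trajectory can sneak into $z$. This is where the transversality bound $\beta$ and the one-sided $\C^1$ hypotheses on $c$ in requirement \ref{Req:TransportField} really do the work, and it needs a clean local-coordinates argument near the arc.
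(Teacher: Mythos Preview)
Your argument for part~(a) is essentially the paper's own: Picard--Lindel\"of for existence, the Lyapunov inequality $\frac{d}{dt}T_0(y(t)) \geq \beta m_0$ to force finite exit time and to sort out which boundary piece is hit in each time direction, and the one-sided $\C^1$ extension of $c$ to run the backward IVP from $z \in \inner\Sigma_k$ for the ``exactly two'' count. One small slip: the time-reversed curve $\tilde y(\tau)=y(-\tau)$ does \emph{not} satisfy the angle condition \refEq{eqn:Ang1} (you get $\SP<{N},{\tilde y'/|\tilde y'|}> \leq -\beta$, not $\geq \beta$), so you cannot literally invoke Lemma~\ref{Lem:ArcLenBound}b) for the backward piece. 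The fix is immediate --- integrate the same Lyapunov inequality over $[t_-,0]$ and use $0 \leq T_0 \leq 1$ --- but the parenthetical should be corrected.

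For part~(b) you take a genuinely shorter route than the paper. The paper starts from the solution $y$ of the \emph{original} IVP~\refEq{eqn:IVPfor}, defines the level-crossing time $\tau(\lambda)$ via the implicit function theorem applied to $T_0(y(\tau))=\lambda$, sets $y_0(\lambda):=y(\tau(\lambda+T_0(x)))$, and then checks that this reparametrization solves the transformed IVP~\refEq{eqn:IVPtrans}; the identity $T_0(y_0(\lambda))=\lambda+T_0(x)$ is then a tautology. You instead observe directly that $c_0$ is $\C^1$ with denominator bounded below by $\beta m_0$, invoke Picard--Lindel\"of for \refEq{eqn:IVPtrans}, and compute $\frac{d}{dt}T_0(y_0(t))=1$. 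Your approach is cleaner for the stated conclusion; the paper's reparametrization has the side benefit of exhibiting $y_0$ explicitly as a time-change of $y$ (same trajectory, different clock), which is conceptually useful for the polar-coordinate picture in Corollary~\ref{Cor:PolarCoord} but not logically needed for Lemma~\ref{Lem:RadialTrafo} itself.
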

\medskip

\begin{proof} ~\smallskip
	\begin{enumerate}[a)]
		\item We consider the IVP \refEq{eqn:IVPfor}.
			Because $c$ is Lipschitz continuous by requirement \ref{Req:TransportField} part 3a) there exists a maximally continued, 
			unique solution $y$ with time domain $\IV]{t_-},{t_+}[$ and $0 \in \IV]{t_-},{t_+}[$.
			\smallskip
			 
			Because of the unit speed condition $|c| = 1$, $y$ never stops inside $ \Omega \wo \Sigma $ and never blows up.
			The causality condition (requirement \ref{Req:TransportField} part 2b) ) implies, by
			\begin{equation} \label{eqn:Increase}
				 \frac{d}{dt} T_0( y(t) ) = \SP< {\nabla T_0( y(t) )}, {c(y(t))} >  \geq  m_0 \cdot \beta > 0 \;,
			\end{equation}
			that $T_0( y(t) )$ strictly increases at least at a rate of $m_0 \cdot \beta$.\\
			Thus, $y$ collapses at the boundary of $ \Omega \wo \Sigma $ and by (\ref{eqn:Increase}) there follows:
			\begin{itemize}
				 \item Going forward $t \to t_+$: collapse at $\Sigma$ after finite time $t_+$ ,
				 \item Going backward $t \to t_-$: collapse at $\bd \Omega$ after finite time $t_-$.
			 \end{itemize}
			 Because of unit speed the values $t_+$ , $t_-$ are exactly the arc lengths, which are finite by lemma \ref{Lem:ArcLenBound}.
			 \smallskip
			 
			Assume now that  $z \in \inner{\Sigma}_k$ and consider the ''plus side'', i.e., the side  where $n_k(z)$ points.
			Since $c$ and $Dc$ both extend from the plus side  onto $\inner{\Sigma}_k$ by $c^+$ and $(Dc)^+$,  the backward IVP
			\begin{align*}
				y' &= -c(y) \;, & y(0) &= z  \; , \qquad \mbox{with} \; c(z) := c^+(z) \;.
			\end{align*}
			has a unique solution that starts at $z \in \inner{\Sigma}_k$ and evolves away from $\Sigma$ into the plus side. 
			Hence, vice versa there is only one solution $y$ of the forward IVP \refEq{eqn:IVPfor} that comes from the plus side,
			heads for $z \in \inner{\Sigma}_k$, and hits $z$ in the end. The same argument holds for the minus side.
		\item We consider again the forward IVP \refEq{eqn:IVPfor},  the initial value $x$ of which satisfies $T_0(x) < 1 = \max\limits_{z \in \Omega} T_0(z)$. 
			For $\lambda $ with $T_0(x) \leq \lambda < 1$, there is a unique time $\tau(\lambda)$, when the solution $y$ of \refEq{eqn:IVPfor} 
			crosses the $\lambda$-level of $T_0$. 
			This is because $T_0( y(t) )$ increases strictly by \refEq{eqn:Increase}, so $y$ crosses the $\lambda$-level only once .
			
			Then, viewing $\tau$ as a function of $\lambda$, the implicit function theorem applied to
			\[ 
				T_0 ( y( \tau ) ) = \lambda
			\]
			yields the differentiability of $\tau$ w.r.t. $\lambda$ and the derivative:
			\[ 
				\tau'(\lambda) = \left. \frac{1}{\SP< {\nabla T_0(z)} , {c(z)} >} \right|_{z=y( \tau(\lambda) )} \; .
			\]
			Using $\tau$, we now change the independent variable
			\[
				y_0(\lambda ) := y(\tau(\lambda + \lambda_0) ) \;,
			\]
			where $\lambda_0 := T_0(x)$ and $\tau(\lambda_0) = 0$.
			Then, $y_0$ satisfies the initial condition $y_0(0) = x$ and has the derivative
			\begin{align*}
				y_0' (\lambda ) &= y' (\tau(\lambda + \lambda_0) ) \cdot \tau'(\lambda + \lambda_0) 
				 = \left. \left( c(z)  \cdot  \frac{1}{\SP< {\nabla T_0(z)} , {c(z)} >} \right) \right|_{z=y( \tau(\lambda + \lambda_0) )} \\
				& = \left.  \frac{c(z)}{\SP< {\nabla T_0(z)} , {c(z)} >} \right|_{z=y_0( \lambda)} = c_0(y_0(\lambda)) \;.
			\end{align*}
			Consequently, $y_0$ is the unique solution of the transformed IVP \refEq{eqn:IVPtrans}
			and --- by construction --- satisfies 
			\[ 
				T_0(y_0(\lambda)) = T_0(y(\tau(\lambda + \lambda_0) )) = \lambda + \lambda_0 = \lambda + T_0(x).
			\]			
	\end{enumerate}
	\hfill
\end{proof}
\smallskip

By lemma \ref{Lem:RadialTrafo} part b), we get the useful property that -- when using $c_0$ instead of the original transport field $c$ --
the time variable of a characteristic $y_0$ is given by $T_0$. We use this feature to introduce new coordinates on $\Omega \wo \Sigma$ whose concept is similar to
polar coordinates on a disk: the level lines of $T_0$ will play the role of the concentric circles and the characteristic curves that of the radial lines.

Whenever we speak about $c_0$ we mean the transformed version of a given transport field $c$ according to lemma \ref{Lem:RadialTrafo} part b).
\medskip

\begin{cor} \label{Cor:PolarCoord}
	(Polar coordinates)
	Let $\gamma$ be a regular periodic parametrization of $\bd \Omega$ with generator $\gamma|_{I}$, $I=[a,b[$.
	\begin{enumerate}[a)]
		\item	The general solution $\xi(t,s)$ of the forward IVP
			\begin{align*}
				y' &= c_0(y) \; , & y(0) &= \gamma(s) \;,
			\end{align*}
			defines a diffeomorphism $\; \xi : \IV]0,1[ \times \IV]a,b[ \to \Omega \wo (\Sigma \cup S) \; $ , where
			\[ 
				S =  \left\{ \xi(t,a) : t \in \IV]0,1[ \right\} \; .
			\]	
		\item	Let $\eta(t,x)$ denote the general solution of the backward IVP
			\begin{align*}
				y' &= -c_0(y) \; , & y(0) &= x \in \Omega \wo (\Sigma \cup S)\; .
			\end{align*}
			Then the inverse map $\xi^{-1}(x) = (t(x),s(x))^T$ is given by
			\begin{align*}
				\xi^{-1}(x) &=  ( \; T_0(x) \;,\; \gamma^{-1}(\eta(T_0(x) , x)) \; )^T\; , &  x& \in \Omega \wo (\Sigma \cup S) \;.
			\end{align*}	
			The relation between $\xi$ and $\eta$ is
			\begin{equation}\label{eqn:RelFBChar}
				\xi(t,s(x)) = \eta(\; T_0(x) - t \; , \; x \;)   \; .
			\end{equation}
		\item	If $\gamma$ is oriented clockwise, the Jacobian $D\xi = (\pd_t \xi | \pd_s \xi)$ of the diffeomorphism $\xi$ has a positive determinant 
			and  the estimate
			\begin{equation}\label{eqn:RelDet}
				0 < \det D\xi \leq |\partial_t \xi| |\partial_s \xi| \leq \frac{\det D\xi}{\beta} 
			\end{equation}
			holds true.		
			If $\gamma$ is counter-clockwise, the assertions hold for $-\det D\xi$.
			\medskip
		\item For each of the relatively open $\C^1$ arcs $\inner{\Sigma}_k$ of $\Sigma$, we can find -- w.r.t. the orientation $n_k$ -- two subsets $J_{k,+}$ and $J_{k,-}$ of $I$ 
			such that the maps $\xi(1,s)$ with $s \in J_{k,+}$ and $\xi(1,s)$ with $s \in J_{k,-}$ are both regular $\C^1$-parametrizations of $\inner{\Sigma}_k$.
	\end{enumerate}
\end{cor}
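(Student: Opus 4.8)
The plan is to deduce all four parts from Lemma~\ref{Lem:RadialTrafo}, treating them in order.

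\emph{Part (a).} First note that $c_0 = c/\langle c,\nabla T_0\rangle \in \C^1(\Omega\wo\Sigma)^2$: the numerator is $\C^1$ by Requirement~\ref{Req:TransportField}, $\nabla T_0$ is $\C^1$ on $\Omega\wo\Sigma$ by Requirement~\ref{Req:TimeFunc} together with \refEq{eqn:TrafoT}, and the denominator is bounded below by $m_0\beta>0$ as in \refEq{eqn:Increase}. Hence $\xi$ is $\C^1$ by smooth dependence on initial data, and writing $\xi(t,\cdot)=\Phi_t\circ\gamma$ with $\Phi_t$ the time-$t$ flow of $c_0$ (a $\C^1$ diffeomorphism onto its image) gives $\pd_s\xi = D\Phi_t(\gamma(s))\,\gamma'(s)\neq 0$; since moreover $\pd_t\xi = c_0$ is transverse to the level sets of $T_0$ (because $\langle c_0,\nabla T_0\rangle=1$) while $\pd_s\xi$ is tangent to them (differentiate $T_0(\xi(t,s))=t$ from Lemma~\ref{Lem:RadialTrafo}b in $s$), the Jacobian $D\xi$ is everywhere nonsingular. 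For injectivity: $\xi(t_1,s_1)=\xi(t_2,s_2)=x$ forces $t_1=T_0(x)=t_2$ by Lemma~\ref{Lem:RadialTrafo}b, and the characteristic through $x$ — unique by ODE uniqueness — meets $\bd\Omega$ in a single point (there $T_0=0$, attained only once along a characteristic by \refEq{eqn:Increase}), whence $\gamma(s_1)=\gamma(s_2)$ and therefore $s_1=s_2$ because $\gamma|_I$ is a generator. The same reasoning identifies $\xi(]0,1[\times]a,b[)$ with $\Omega\wo(\Sigma\cup S)$: any $x\in\Omega\wo\Sigma$ has $T_0(x)\in]0,1[$ and lies on the characteristic issuing from a unique boundary point $\gamma(s)$, with $s\neq a$ precisely when $x\notin S$. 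A $\C^1$ bijection with nonsingular derivative is a diffeomorphism.

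\emph{Part (b).} This is read off from the construction: by Lemma~\ref{Lem:RadialTrafo}b the first component of $\xi^{-1}$ is $t(x)=T_0(x)$; the backward $-c_0$-trajectory $\eta(\cdot,x)$ satisfies $T_0(\eta(\tau,x))=T_0(x)-\tau$, so it meets $\bd\Omega$ exactly at $\tau=T_0(x)$, giving $s(x)=\gamma^{-1}(\eta(T_0(x),x))$; and \refEq{eqn:RelFBChar} holds because, as functions of $t$, both $\xi(t,s(x))$ and $\eta(T_0(x)-t,x)$ solve $y'=c_0(y)$ and agree at $t=0$ with common value $\gamma(s(x))$, hence coincide by uniqueness. \emph{Part (c).} Since $\pd_s\xi\perp\nabla T_0$, write $\pd_s\xi=\rho\cdot N^\perp$, where $N=\nabla T_0/|\nabla T_0|=\nabla T/|\nabla T|$ is the level-set normal of Requirement~\ref{Req:TimeFunc} and $N^\perp$ is the $90^\circ$ rotation of $N$ with $\det(N\,|\,N^\perp)=1$; $\rho$ is continuous and nowhere zero, hence of constant sign, which $\rho(0,s)=|\gamma'(s)|>0$ fixes as positive when $\gamma$ is clockwise. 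Decomposing $c_0=\langle c_0,N\rangle N+\langle c_0,N^\perp\rangle N^\perp$ and using $\langle c_0,N\rangle=1/|\nabla T_0|$ together with $|\pd_t\xi|=|c_0|=1/\langle c,\nabla T_0\rangle$, a one-line computation yields $\det D\xi=\rho/|\nabla T_0|>0$ and $\det D\xi=\langle c,N\rangle\,|\pd_t\xi|\,|\pd_s\xi|$, so that the two inequalities in \refEq{eqn:RelDet} are exactly $\beta\le\langle c,N\rangle\le1$ from Requirement~\ref{Req:TransportField}; the counter-clockwise case merely flips the sign of $\rho$ and hence of $\det D\xi$.

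\emph{Part (d)} is the main obstacle, and it is where the one-sided hypotheses of Requirements~\ref{Req:TimeFuncSigma} and~\ref{Req:TransportField} are used. Fix $\inner\Sigma_k$ with a regular $\C^1$ parametrization $\sigma:J\to\inner\Sigma_k$ compatible with the orientation $n_k$. By Requirement~\ref{Req:TransportField}.1b the field $c$, hence $c_0$, extends as a $\C^1$ field to a one-sided neighborhood of $\inner\Sigma_k$ on the plus side, so the backward $-c_0$-flow $\Phi$ starting at $\sigma(u)$ is $\C^1$ in $u$; exactly as in the last part of the proof of Lemma~\ref{Lem:RadialTrafo}a this trajectory enters the plus side and, since $T_0$ falls at unit rate, reaches $\bd\Omega$ at flow-time $1$, at a point $\gamma(s_+(u))$. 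The map $s_+$ is $\C^1$; it is injective because two distinct characteristics cannot share the limit point $\sigma(u)$; and $s_+'\neq0$ because $u\mapsto\Phi(\sigma(u))$ has nonvanishing velocity ($D\Phi$ invertible, $\sigma$ regular) and $\gamma^{-1}$ is a regular chart of $\bd\Omega$. Hence $s_+$ is a $\C^1$ diffeomorphism of $J$ onto a subset $J_{k,+}\subset I$, and for $s\in J_{k,+}$ the map $\xi(1,s):=\lim_{t\to1^-}\xi(t,s)=\sigma(s_+^{-1}(s))$ is a regular $\C^1$ parametrization of $\inner\Sigma_k$; the minus side yields $J_{k,-}$ in the same way. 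The delicate point throughout is precisely the $\C^1$-dependence and local-diffeomorphism property of the flow up to and along $\inner\Sigma_k$, which is what upgrades the endpoint map $s_\pm$ from a mere continuous bijection to a genuine $\C^1$ reparametrization.
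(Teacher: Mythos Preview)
The paper does not supply its own proof here; it refers to \cite[section 3.2]{MyDiss}. Your arguments for (a)--(c) are correct and standard. Part (d), however, contains a genuine gap.

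You build $s_+$ from the backward $-c_0$-flow issued at $\sigma(u)\in\inner\Sigma_k$, claiming that ``$c$, hence $c_0$, extends as a $\C^1$ field'' to the one-sided neighbourhood. The ``hence'' is already unjustified (the scaling factor involves $\nabla T_0$, which is singular at $\Sigma$), but the decisive point is this: by Lemma~\ref{Lem:ArcLenBound}a the modulus $|\nabla T_0|$ blows up at $\Sigma$, so $|c_0|=1/\SP<{c},{\nabla T_0}>\to 0$ there. Whatever continuous one-sided extension $c_0$ admits on $\inner\Sigma_k$, it is identically zero, and the $-c_0$-trajectory through any $z\in\inner\Sigma_k$ is the constant $y\equiv z$; it never ``reaches $\bd\Omega$ at flow-time $1$''. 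Equivalently, the forward characteristic $\xi(\cdot,s)$ attains $\Sigma$ only as the limit $t\to 1^-$, not as a value of the $c_0$-flow. Your map $s_+$ is therefore undefined by the stated construction, and the subsequent regularity claims about it are empty.

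The fix is to work with the \emph{unscaled} field $c$ near $\Sigma$, exactly as the paper already does in the last paragraph of the proof of Lemma~\ref{Lem:RadialTrafo}a and again in the proof of Lemma~\ref{Lem:SRS}c. Since $|c|=1$ and both $c$ and $Dc$ have one-sided extensions to $\inner\Sigma_k$ (Requirement~\ref{Req:TransportField}.1b), the backward $-c$-flow $\Psi_\tau$ from $\sigma(u)$ is well-posed, leaves $\Sigma$ into the plus side, and depends $\C^1$ on $u$; the hitting time $\tau_+(u)$ on $\bd\Omega$ is $\C^1$ by the implicit function theorem (transversality comes from $\SP<{c},{N}>\ge\beta>0$), and $s_+(u)=\gamma^{-1}\bigl(\Psi_{\tau_+(u)}(\sigma(u))\bigr)$ is then the required $\C^1$ diffeomorphism onto $J_{k,+}$. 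The only cost of the repair is that the hitting time is variable rather than identically $1$.
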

\smallskip

\begin{proof}
	The proof of these statements can be found in \cite[section 3.2]{MyDiss}. \hfill
\end{proof}
\medskip

Our customized coordinate system is now given by the diffeomorphism $\xi$ of corollary \ref{Cor:PolarCoord} part a) and we can turn to solving problem \refEq{eqn:LinProblem}.
\medskip
			
\section{Existence of a Solution}\label{Sect:LPExist}
In this section we construct a candidate solution to the linear problem \refEq{eqn:LinProblem} and study its properties.
The tool for construction is the method of characteristics (see e.g. \cite{Evans:PDE} or \cite{Zauderer:PDE}) which we apply it to the scaled PDE  
\begin{equation} \label{eqn:TPDEtrans}
	\SP< {c_0(x)}, {Du} > = f_0(x) \cdot \Lm^2 \quad \mbox{ in } \quad \Omega \wo \Sigma \;,
\end{equation}
with
\begin{align}\label{eqn:DeffNull}
	 c_0(x) &:= \frac{c(x)}{\SP< {c(x)}, {\nabla T_0(x)} >} \qquad \mbox{and} & f_0(x) &:= \frac{f(x)}{\SP< {c(x)}, {\nabla T_0(x)} >} \;.
\end{align}
For PDE \refEq{eqn:TPDEtrans} the characteristic equation is exactly the forward IVP from corollary \ref{Cor:PolarCoord} a)
\begin{align*} 
	y' &= c_0(y) \;, & y(0) &= \gamma(s) \:, 
\end{align*}
and the family of forward characteristics is $\xi$. Clear\-ly, the solution $\eta$ of the corresponding backward IVP
is the family of backward characteristics.

Let now $v(t,s) := u \circ \xi(t,s) $. Then  -- at least formally -- the partial derivative of $v$ w.r.t. $t$ is given by
\[ 
	\partial_t v(t,s) = \left< \nabla u \circ \xi(t,s) , \partial_t \xi(t,s) \right> = \left< \nabla u  , c_0 \right> \circ \xi(t,s) = f_0 \circ \xi(t,s) \:, 
\]
together with the initial condition
\[
	v(0,s) = u(\xi(0,s)) = u(\gamma(s)) = \gamma^*u_0(s) \;.
\]
Here, $\gamma^*$ denotes the pull-back operation.
Thus, by the fundamental theorem of calculus, we obtain
\begin{equation}\label{eqn:SolChar}
	v(t,s) = \gamma^*u_0(s) + \int\limits_0^t f_0 \circ \xi(\tau,s) \: d\tau \quad =: v_1(t,s) + v_2(t,s)\; .
\end{equation}
The function $v$ represents our candidate solution $u$ in characteristic variables $(t,s)$. By using the inverse map $\xi^{-1}$ from corollary \ref{Cor:PolarCoord} b)
and the relation \refEq{eqn:RelFBChar} between $\xi$ and $\eta$, we push $v$ forward
onto  $\Omega \wo \Sigma$ to have $u = v \circ \xi^{-1}$ in the original variables $x$,
\begin{equation}\label{eqn:SolOrig} 
	u(x) = u_0( \eta(T_0(x),x)) + \int\limits_0^{T_0(x)} f_0 \circ \eta(\tau ,x) \: d\tau \quad =: u_1(x) + u_2(x)\;. 
\end{equation}
\smallskip

Equations \refEq{eqn:SolChar} and \refEq{eqn:SolOrig} present our candidate solution.
The next theorem shows that the candidate belongs to the space $\BV(\Omega) \cap \L^{\infty}(\Omega)$.
\medskip

\begin{theo} \label{Theo:ElemBV}
	(Element of $\BV$)
	The candidate solution $u$ from \refEq{eqn:SolOrig}  with its decomposition $u = u_1 + u_2$ has the properties:
	\medskip
	\begin{enumerate}[a)]
		\item $u$ is an element of  $\BV(\Omega \wo \Sigma) \cap \L^{\infty}(\Omega) $.	
			Its $\L^{\infty}(\Omega)$ norm is bounded by
			\[
				\|u\|_{\L^{\infty}(\Omega)} \leq \|u_0\|_{\L^{\infty}(\bd \Omega)} + \frac{\|f\|_{\infty}}{\beta \cdot m_0}  \;. 
			\]	
			The derivative measure of $u$ is 
			\[
				Du = c_0^{\perp}(x) \cdot \mu \; + \; \nabla u_2(x) \cdot \Lm^2 \quad \mbox{with} \quad \mu := \xi_{\sharp}\left( \Lm^1 \otimes D\gamma^*u_0\right) \;,
			\]
			whereas the total variation is bounded by
			\begin{align*}
				|Du|(\Omega \wo \Sigma) \leq M_{\Omega \wo \Sigma} := \quad & \frac{|Du_0|(\bd \Omega)}{\beta \cdot m_0} + \left( \frac{\|f\|_{\infty}}{\beta} + 
				\frac{\|\nabla f \|_{\infty}}{\beta^2 \cdot m_0}\right) \cdot \Lm^2(\Omega) \\
				&+ \frac{\|f\|_{\infty}}{\beta^3 \cdot m_0} \cdot \left( \|Dc\|_{L^1(\Omega)} + \|DN\|_{L^1(\Omega)}\right) \;.
			\end{align*}
		\item $u$ extends onto $\Sigma$, i.e., $u$ is an element of  $\BV(\Omega) \cap L^{\infty}(\Omega)$.			
			In comparison to part a) the extension introduces -- in the derivative $Du$ -- an additional jump part for every $C^1$ arc $\Sigma_k$ of $\Sigma$ : 
			\begin{align*}
				Du = \; & \; c_0^{\perp}(x) \cdot \mu \; + \; \nabla u_2(x) \cdot \Lm^2 
				 + \sum\limits_{k=1}^n (u_{\Sigma_k}^+(x)-u_{\Sigma_k}^-(x)) \; n_k(x) \cdot \Hm^1 \rto \Sigma_k \;,
			\end{align*}
			where $u_{\Sigma_k}^-$ and $u_{\Sigma_k}^+$ are the left and right interior $\BV$-traces of $u$ on $\Sigma_k$.			
			The bound on the total variation is added up by
			\[
				|Du|(\Omega) \leq M_{\Omega \wo \Sigma} + 2 \cdot \|u\|_{\L^{\infty}(\Omega)} \cdot \Hm^1(\Sigma) \;.
			\]  
	\end{enumerate}
\end{theo}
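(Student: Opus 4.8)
The plan is to build everything on the explicit representation $u=u_1+u_2$ from \refEq{eqn:SolOrig} and the polar-coordinate diffeomorphism $\xi$ of corollary \ref{Cor:PolarCoord}. For part a) I would work first in characteristic coordinates $(t,s)$ with $v=v_1+v_2$ as in \refEq{eqn:SolChar}, establish the bounds there, and then transport them back to $\Omega\wo\Sigma$ via $\xi$, using the Jacobian estimate \refEq{eqn:RelDet} to control the change of variables. The $\L^\infty$ bound is the easiest step: $|v_1(t,s)|=|\gamma^*u_0(s)|\le\|u_0\|_{\L^\infty(\bd\Omega)}$, while for $v_2$ one uses $|f_0|=|f|/\SP<c,\nabla T_0>\le\|f\|_\infty/(\beta m_0)$ by requirement \ref{Req:TransportField} part 2b) together with lemma \ref{Lem:ArcLenBound} part a), and the fact that the $t$-integration runs over a length-$1$ interval (since $T_0$ ranges in $[0,1]$ and $T_0(y_0(t))=t+T_0(x)$ by lemma \ref{Lem:RadialTrafo} part b)). This gives exactly $\|u\|_{\L^\infty(\Omega)}\le\|u_0\|_{\L^\infty(\bd\Omega)}+\|f\|_\infty/(\beta m_0)$.

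Next I would compute $Du$. The term $u_2$ is $\C^1$ on $\Omega\wo\Sigma$ (it is an integral of the smooth $f_0$ along the smooth flow $\eta$), so it contributes $\nabla u_2\cdot\Lm^2$; one must differentiate under the integral sign, which is legitimate by the smoothness of $\xi^{-1}$, $\eta$, and $f_0$ on $\Omega\wo(\Sigma\cup S)$, and then observe the formula extends across the seam $S$ because $u$ itself does not see $S$. The term $u_1(x)=u_0(\eta(T_0(x),x))$ is the pushforward under $\xi$ of the function $(t,s)\mapsto\gamma^*u_0(s)$, which is constant along the $t$-direction; hence its distributional derivative is carried by the $s$-derivative only, i.e. $Du_1=\xi_\sharp(\Lm^1\otimes D\gamma^*u_0)$ weighted by the appropriate geometric factor, and since $\gamma^*u_0$ varies only in the $s$-direction the derivative points in the direction $\perp$ to the characteristics, giving $c_0^\perp\cdot\mu$ with $\mu=\xi_\sharp(\Lm^1\otimes D\gamma^*u_0)$. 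For the total-variation bound I would split $|Du|(\Omega\wo\Sigma)\le|Du_1|(\Omega\wo\Sigma)+|Du_2|(\Omega\wo\Sigma)$: the first is $|\mu|(\Omega\wo\Sigma)$, estimated by $|D\gamma^*u_0|(\bd\Omega)$ times the $t$-length $1$ times the distortion factor $1/(\beta m_0)$ coming from \refEq{eqn:RelDet} (the $|\partial_s\xi|$ factor is bounded via the arc-length bound of lemma \ref{Lem:ArcLenBound} part b), since each characteristic satisfies the angle condition \refEq{eqn:Ang1}). For $|Du_2|$ one differentiates the integral: there is a "direct" term from $f_0$ itself, bounded by $(\|f\|_\infty/\beta+\|\nabla f\|_\infty/(\beta^2 m_0))\Lm^2(\Omega)$ after accounting for the $1/\SP<c,\nabla T_0>$ weight and its gradient, and a "geometric" term from differentiating the flow $\eta$ with respect to $x$, whose size is governed by $\|Dc\|_{L^1(\Omega)}$ and $\|DN\|_{L^1(\Omega)}$ (these $L^1$ bounds are exactly requirements \ref{Req:TransportField} part 3b) and \ref{Req:TimeFuncSigma} part 2c)), producing the last line of the claimed bound with the factor $\|f\|_\infty/(\beta^3 m_0)$.

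For part b) the task is to show $u$ extends to a $\BV$ function on all of $\Omega$ and to identify the extra jump. Since $u\in\BV(\Omega\wo\Sigma)\cap\L^\infty$ and $\Sigma$ is a finite union of rectifiable $\C^1$ arcs with $\Hm^1(\Sigma)<\infty$ and $\Lm^2(\Sigma)=0$, one first checks that $u$ has well-defined one-sided $\BV$-traces $u_{\Sigma_k}^\pm$ on each $\inner\Sigma_k$ from the two sides; this follows because by lemma \ref{Lem:RadialTrafo} part a) exactly two characteristics hit each $z\in\inner\Sigma_k$, one from each side, so $u_{\Sigma_k}^+(z)=u_0(\eta(1,z_+))+\int_0^1 f_0\circ\eta(\tau,z_+)\,d\tau$ and similarly for the minus side, each a bounded $\BV$ function of the arc-length parameter on $\Sigma_k$ (the parametrizations $\xi(1,s)$, $s\in J_{k,\pm}$, of corollary \ref{Cor:PolarCoord} part d) provide this). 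Then a standard gluing lemma for $\BV$ functions across a $\C^1$ hypersurface shows that $\mathbb 1_{\Omega\wo\Sigma}\,u$, viewed on $\Omega$, is $\BV$ with $Du$ equal to its restriction to $\Omega\wo\Sigma$ plus the jump contribution $\sum_k(u_{\Sigma_k}^+-u_{\Sigma_k}^-)\,n_k\cdot\Hm^1\rto\Sigma_k$; the orientation $n_k$ enters via requirement \ref{Req:StopSet} part 3 and matches $N^\pm=\mp n_k$ from requirement \ref{Req:TimeFuncSigma} part 2a). The added total variation is then at most $\sum_k\int_{\Sigma_k}|u_{\Sigma_k}^+-u_{\Sigma_k}^-|\,d\Hm^1\le 2\|u\|_{\L^\infty(\Omega)}\Hm^1(\Sigma)$.

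The main obstacle I expect is the geometric term in the $|Du_2|$ estimate: controlling $\partial_x\eta$ (equivalently the sensitivity of the characteristic flow to its initial point) requires a Gronwall-type argument along characteristics where the "constant" involves $\int|Dc_0|$ along the trajectory, and $Dc_0$ inherits the non-integrable-looking blow-up of $\nabla T_0$ near $\Sigma$; the point is that after the $T_0$-substitution the offending factors reorganize so that what remains to integrate is $|Dc|+|DN|$, which are $L^1(\Omega)$ by hypothesis — making this blow-up integrable is precisely why the transformations in lemmas \ref{Lem:ArcLenBound} and \ref{Lem:RadialTrafo} were set up as they were, but carrying the bookkeeping through the change of variables \refEq{eqn:RelDet} near the corner/branching nodes $z_k$ is the delicate part. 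A secondary subtlety is justifying differentiation under the integral for $u_2$ uniformly up to $\Sigma$, which again uses requirement \ref{Req:TransportField} part 3a) (the local bound $|Dc|\le M_\varepsilon$ away from $V_\varepsilon$) together with the $L^1$ control near the nodes.
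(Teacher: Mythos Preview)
Your high-level plan matches the paper's: both work in the forward characteristic coordinates $(t,s)$, split $u=u_1+u_2$, compute the distributional derivative via integration by parts against test functions $\varphi\in\C_c^1$, and push everything back with $\xi$. The $\L^\infty$ bound and the identification $Du_1=c_0^\perp\cdot\mu$ are done essentially as you describe.

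The one place where your mechanism diverges is the $\|\nabla u_2\|_{\L^1}$ estimate, and this is exactly the obstacle you flag. You propose to bound the flow sensitivity $D_x\eta$ by a Gronwall argument along backward characteristics; the paper never does this and never estimates $D_x\eta$ at all. Instead it stays in $(t,s)$ coordinates throughout: one writes
\[
\int_\Omega|\nabla u_2|\,dx=\int\!\!\int|\nabla u_2\circ\xi|\det D\xi\,dt\,ds
\le\int\!\!\int\bigl(|\pd_s\xi|\,|\pd_t v_2|+|\pd_t\xi|\,|\pd_s v_2|\bigr)\,dt\,ds,
\]
uses $\pd_t v_2=f_0\circ\xi$ and $\pd_s v_2=\int_0^t\SP<{\nabla f_0\circ\xi},{\pd_s\xi}>d\tau$, and converts back via the angle inequality $|\pd_t\xi|\,|\pd_s\xi|\le\det D\xi/\beta$ from \refEq{eqn:RelDet}. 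The point you call ``reorganizing so that only $|Dc|+|DN|$ remains'' is then purely algebraic: expanding $\nabla f_0$ one finds the term $c^T\,D^2T_0\,N^\perp/|\nabla T_0|$, and the identity $DN=N^\perp N^{\perp T}D^2T_0/|\nabla T_0|$ turns this into $N^{\perp T}DN\,c$. No exponential from Gronwall ever appears, which is why the constants come out polynomial in $1/\beta$. A Gronwall bound on $|D_x\eta|$ would give $\exp(\int|Dc_0|)$ and would not recover the stated $M_{\Omega\wo\Sigma}$. (Incidentally, for the $Du_1$ bound there is no $|\pd_s\xi|$ factor to control: one simply uses $|c_0^\perp|=|c_0|\le 1/(\beta m_0)$ together with $|\mu|(\Omega\wo\Sigma)=|D u_0|(\bd\Omega)$.)

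For part b) the two approaches genuinely differ. You invoke an abstract $\BV$ gluing lemma across the $\C^1$ arcs of $\Sigma$; this is legitimate once the one-sided traces are in hand (and you correctly identify them via the two characteristics hitting each $z\in\inner\Sigma_k$). The paper instead derives the jump term by a direct limit: it first establishes the identity on the sublevel set $\Omega_\lambda=\{T_0\le\lambda\}$ with a boundary term $\int\SP<\varphi,{-N}>u\,d\Hm^1$ along $\{T_0=\lambda\}$, then lets $\lambda\to 1$, decomposing that level-line integral according to the partition $I=\bigcup_k(J_{k,+}\cup J_{k,-})\cup J$ of corollary \ref{Cor:PolarCoord} d). The pieces over $J_{z_l}$ (parameters hitting singular nodes) vanish because the corresponding arcs shrink to points, while the $J_{k,\pm}$ pieces converge to the two one-sided integrals over $\Sigma_k$, producing $(u_k^+-u_k^-)n_k\cdot\Hm^1\rto\Sigma_k$. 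Your gluing argument is shorter; the paper's limit argument has the advantage of being self-contained and of simultaneously exhibiting the traces $u_k^\pm=v(1,s^\pm(\cdot))$ explicitly in terms of the characteristic coordinates.
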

\medskip

\begin{proof}  
	Before going into the details of the proof,  we summarize some facts concerning the change of variables.
	If $\varphi \in \C_c^1(\Omega)$ or $\varphi \in \C_c^1(\Omega \wo \Sigma)$ is a test function, we will denote by $\psi(t,s) := \varphi \circ \xi(t,s)$ 
	the test function in characteristic variables. By the chain rule we then obtain the derivative with respect to $(t,s)$ 
	\[ 
		\nabla_{t,s} \psi = D\xi^T \cdot \nabla_x \varphi \circ \xi \quad \Rightarrow
		\quad \det D\xi \cdot \nabla_x \varphi \circ \xi = \left(-\partial_s \xi^{\perp} | \partial_t \xi^{\perp} \right) \cdot \nabla_{t,s} \psi\; .
	\]
	Let $l=l(k)$ be the non-trivial permutation of $\{1,2\}$. Then we write for the $k$-th component
	\begin{equation}\label{eqn:ChangeVarDeriv}
		\begin{aligned}
			\det D\xi \cdot \pd_k \varphi \circ \xi 	&= (-1)^l \cdot ( \; \pd_t( \pd_s \xi_l \cdot \psi) - \pd_s( \pd_t \xi_l \cdot \psi ) \; ) \; .  
		\end{aligned}
	\end{equation}
	This equality can easily be derived from the product rule.
	
	Without loss of generality, we assume that the parametrization of the boundary $\gamma(s) = \xi(0,s)$ is clockwise which implies $\det D\xi > 0$.
	In addition, we have the useful relations
	\begin{equation}\label{eqn:XiGeo}
		\pd_t \xi^{\perp} = c_0^{\perp} \circ \xi \;, \qquad \pd_ s \xi^{\perp} = - N\circ \xi \cdot |\partial_s \xi| \;.
	\end{equation}
	The first equality is clear by the characteristic ODE. The second is a consequence of $\xi(t,.)$ being a clockwise parametrization of the $t$-level of $T_0$.
	Finally, we remark that $\psi(t,s)$ is periodic w.r.t. the variable $s$, since $\xi$ is. Thus we have $\psi(t,a) = \psi(t,b)$.
	\medskip
	
	Now we can compute the derivative measure $Du$, which in both parts is the same process. Let $\varphi \in \C_c^1(\Omega)$ be a test function.
	For  the moment we restrict the discussion to subsets $\Omega_{\lambda}$ of $\Omega$  which are lower level-sets of $T_0$, that is
	\[ 
		\Omega_{\lambda} := \Omega \cap \{ x \in \Omega : T_0(x) \leq \lambda\} \;,
	\]
	for $0 < \lambda < 1$. Note that $\Omega_{1} = \Omega $.	
	When later on we have $\varphi \in \C_c^1(\Omega \wo \Sigma)$ (respectively $\varphi \in \C_c^1(\Omega \wo \Sigma)^2$),
	as is the case for part a), we will choose $\lambda$ big enough such that $\supp \varphi \subset \Omega_{\lambda}$.
	For part b) we will pass to the limit $\lambda \to 1$ instead. 
	\medskip
	
	We separately compute the derivatives of $u_1$ and $u_2$. 
	In order to get $D_k u_1$ we have to look at the following integral: 
	\begin{align*}
		\int\limits_{\Omega_{\lambda}} u_1(x) \; \pd_k \varphi(x) \; dx 
		&= \int\limits_a^b \int\limits_0^{\lambda} v_1(t,s) \; \pd_k \varphi \circ \xi(t,s) \; \det D\xi(t,s) \; dt \;ds \\
		&= (-1)^l \int\limits_a^b \int\limits_0^{\lambda} \gamma^*u_0(s) \; ( \; \pd_t( \pd_s \xi_l \cdot \psi) 
			- \pd_s( \pd_t \xi_l \cdot \psi ) \; ) \; dt \;ds \,.
	\end{align*}
	By changing the order of integration and using the integration by parts formula for functions of one variable, one obtains
	\begin{align*}
		=& \, (-1)^l \int\limits_a^b \gamma^*u_0(s)  \int\limits_0^{\lambda}   \pd_t( \pd_s \xi_l \cdot \psi) \, dt \,ds 
			- (-1)^l \int\limits_0^{\lambda}  \int\limits_a^b \gamma^*u_0(s) \; \pd_s( \pd_t \xi_l \cdot \psi )  \, ds \,dt \\
		=& \, (-1)^l \int\limits_a^b  \gamma^*u_0(s) \;  \left[ \pd_s \xi_l \cdot \psi \right]_{t=0}^{\lambda}  \,ds 
		   \;	+ \; (-1)^l  \int\limits_0^{\lambda}  \int\limits_a^b \pd_t \xi_l \cdot \psi   \, dD\gamma^*u_0(s)   \,dt \;.
	\end{align*}
	In the last step we used the fact that $\gamma^*u_0$ is a periodic $\BV$-function. Because $\varphi$ has compact support in $\Omega$, we have furthermore
	$\psi(0,s) = \varphi \circ \xi(0,s) = \varphi(\gamma(s)) = 0$,
	so the result reduces to
	\begin{align*}
		=& \; (-1)^l \int\limits_a^b v_1(\lambda,s) \;  \pd_s \xi_l(\lambda,s) \cdot \psi(\lambda,s)  \,ds +
			 (-1)^l \int\limits_a^b \int\limits_0^{\lambda} \pd_t \xi_l \cdot \psi  \,dt \, dD\gamma^*u_0(s)  \,.
	\end{align*}
	For the vector-valued version we test with $\varphi \in \C_c^1(\Omega)^2$ and obtain
	\begin{align*}
		& \int\limits_{\Omega_{\lambda}} u_1(x) \; \div \varphi(x) \; dx = \int\limits_{\Omega_{\lambda}} u_1(x) \; \pd_1 \varphi_1(x) \; dx + \int\limits_{\Omega_{\lambda}} u_1(x) \; \pd_2 \varphi_2(x) \; dx \\
		&\, = \, - \, \int\limits_a^b\;  - \pd_s \xi_2(\lambda,s) \cdot \psi_1(\lambda,s) \; v_1(\lambda,s)  \,ds  \, - \, \int\limits_a^b \int\limits_0^{\lambda}\,  -\pd_t \xi_2 \cdot \psi_1  \,dt \, dD\gamma^*u_0(s)  \\
		&\qquad   - \, \int\limits_a^b\;   \pd_s \xi_1(\lambda,s) \cdot \psi_2(\lambda,s) \; v_1(\lambda,s)  \,ds  \, - \, \int\limits_a^b \int\limits_0^{\lambda}\,  \pd_t \xi_1 \cdot \psi_2  \,dt \, dD\gamma^*u_0(s)  \\
		&\, =  \, - \, \int\limits_a^b  \;  \SP< {\psi(\lambda,s)} , {\pd_s \xi^{\perp}(\lambda,s)} > \; v_1(\lambda,s)  \,ds
		  \, - \, \int\limits_a^b \int\limits_0^{\lambda}\;  \SP<  {\psi }, {\pd_t \xi^{\perp} }>  \,dt \, dD\gamma^*u_0(s)  \,.
	\end{align*}
	By using  the relations \refEq{eqn:XiGeo}, the last result can be written as
	\begin{align*}
		\int\limits_{\Omega_{\lambda}} u_1(x) \; \div \varphi(x) \, dx 
		=& \, - \, \int\limits_a^b  \;  \SP< {\varphi}  , {- N} > \circ \xi(\lambda,s)\; u_1 \circ \xi(\lambda,s) \; |\partial_s \xi(\lambda,s)| \,ds \\
		& \, - \, \int\limits_a^b \int\limits_0^{\lambda}\;  \SP<  {\varphi} , {c_0^{\perp}} >   \circ \xi \,dt \, dD\gamma^*u_0(s)  \,.
	\end{align*}
	Here, the first summand integrates w.r.t. the $\Hm^1$-measure along the $\lambda$-level of $T_0$. For the restriction of measures onto $\lambda$-levels of $T_0$ we will use the abbreviation 
	\[ 
		\Hm^1 \rto \lambda := \Hm^1 \rto \{x \in \Omega : T_0(x) = \lambda\} \;.
	\]
	In the second integral we rechange variables by pushing forward the product measure $\Lm^1 \otimes D\gamma^*u_0$ with the diffeomorphism $\xi$.
	Let $\mu$ denote the pushed-forward measure $\mu := \xi_{\sharp} (\Lm^1 \otimes D\gamma^*u_0)$.
	Then we finally obtain
	\begin{align*}
		\int\limits_{\Omega_{\lambda}} u_1(x) \; \div \varphi(x) \; dx 
		=&  -  \int\limits_{\Omega}   \SP< {\varphi(x)} , {- N(x)}  > \; u_1(x) \;d\Hm^1 \rto \lambda(x) \\
		& -  \int\limits_{\Omega_{\lambda}} \SP< {\varphi (x)} , {c_0^{\perp} (x)}>  \;d\mu(x)  \,.
	\end{align*}
	For the derivative of $u_2$ we perform the same steps as above with the integral
	\begin{align*}
		\int\limits_{\Omega_{\lambda}} u_2(x) \; \pd_k \varphi(x) \; dx 
		&= (-1)^l \int\limits_a^b \int\limits_0^{\lambda} v_2(t,s) \; ( \; \pd_t( \pd_s \xi_l \cdot \psi) - \pd_s( \pd_t \xi_l \cdot \psi ) \; ) \; dt \;ds \;.
	\end{align*}
	After changing the order of integration we go on with integration by parts:
	\begin{align*}
		=& \; (-1)^l \int\limits_a^b \int\limits_0^{\lambda} v_2(t,s) \;  \pd_t( \pd_s \xi_l \cdot \psi) \; dt \;ds  - (-1)^l \int\limits_0^{\lambda}  \int\limits_a^b v_2(t,s) \; \pd_s( \pd_t \xi_l \cdot \psi )  \; ds \;dt \\
		=& \; (-1)^l \int\limits_a^b  \left( v_2(\lambda,s) \;  \pd_s \xi_l(\lambda,s) \cdot \psi(\lambda,s)  - \int\limits_0^{\lambda} \pd_t v_2 \;  \pd_s \xi_l \cdot \psi \; dt \right) \;ds  \\
		  & \quad	- \; (-1)^l  \int\limits_0^{\lambda} \int\limits_a^b -\pd_s v_2 \;  \pd_t \xi_l \cdot \psi   \; ds  \;dt \\
		=& \; (-1)^l \int\limits_a^b v_2(\lambda,s) \;  \pd_s \xi_l(\lambda,s) \cdot \psi(\lambda,s)  \;ds 
		- \int\limits_a^b \int\limits_0^{\lambda} (-1)^l( \pd_s \xi_l \; \pd_t v_2 - \pd_t \xi_l \; \pd_s v_2 \;) \cdot \psi  \,dt \, ds \,.
	\end{align*}
	In the second equality we have used the 	periodicity of $\xi$ w.r.t. $s$.
	Because of $v_2 = u_2 \circ \xi$ we can -- according to equation \refEq{eqn:ChangeVarDeriv} -- substitute the last integrand to get
	\begin{align*}
		=& \; (-1)^l \int\limits_a^b v_2(\lambda,s) \;  \pd_s \xi_l(\lambda,s) \cdot \psi(\lambda,s)  \;ds 
		   \; - \; \int\limits_a^b \int\limits_0^{\lambda} \det D\xi \cdot  \pd_k u_2 \circ \xi \cdot \psi  \;dt \; ds \;. 
	\end{align*}
	By means of the last result and a rechange of variables, we end up with 
	\begin{align*}
		\int\limits_{\Omega_{\lambda}} u_2(x) \; \div \varphi(x) \; dx 
		=& \; - \; \int\limits_{\Omega}  \;  \SP< {\varphi(x)} , {- N(x)} > \; u_2(x) \;d\Hm^1 \rto \lambda(x) \\
		& \; - \; \int\limits_{\Omega_{\lambda}}\;  \SP<  {\varphi (x)} , {\nabla u_2 (x) } >  \;dx  \;,
	\end{align*}
	when testing with $\varphi \in \C_c^1(\Omega)^2$.	
	Adding the partial results for $u_1$ and $u_2$ gives us
	\begin{equation} \label{eqn:DuMeas1} 
		\begin{aligned}
			\int\limits_{\Omega_{\lambda}} u(x) &  \; \div \varphi(x) \; dx 
			= \; - \; \int\limits_{\Omega}  \;  \SP<  {\varphi (x)} , {-N(x) } > \; u(x) \;d\Hm^1 \rto \lambda(x)  \\
			&\quad  \; - \; \int\limits_{\Omega_{\lambda}}\;  \SP<  {\varphi (x)} , {c_0^{\perp} (x) } >  \;d\mu(x) \; - \; \int\limits_{\Omega_{\lambda}}\;  \SP<  {\varphi (x)} , {\nabla u_2 (x) } >  \;dx  \;. 
		\end{aligned}
	\end{equation}	
	Now, we are ready to turn to the proof of the assertions a) and b).
	\begin{enumerate}[a)]
		\item In this part we have $\Omega \wo \Sigma$ as the domain of $u$. If we test with $\varphi \in \C_c^1(\Omega \wo \Sigma)^2$, we can choose $\lambda < 1$  large enough that
			equation \refEq{eqn:DuMeas1} reduces to
			\begin{align*}
				\int\limits_{\Omega} u(x)   \; \div \varphi(x) \; dx 
				&=  \; - \; \int\limits_{\Omega}\;  \SP<  {\varphi (x) \;} , {\; c_0^{\perp} (x) \;d\mu(x) + \nabla u_2 (x)   \;dx} >  \;. 
			\end{align*}
			Consequently, the derivative measure is given by $Du = c_0^{\perp} (x) \cdot \mu + \nabla u_2 (x) \cdot \Lm^2$.
			What remains to show is the boundedness of $\|u\|_{\BV(\Omega \wo \Sigma)}$.
			\smallskip
			
			Because $u_0 \in \BV(\bd \Omega)$ is a $\BV$ function of one variable, $u_0$ is essentially bounded (see \cite[chapter 3.2]{AmbrosioBV}).
			By equation \refEq{eqn:SolOrig} then, we obtain a bound on the $\L^{\infty}$-norm of $u$: 
			\[
				\|u\|_{\L^\infty(\Omega)} \leq \| u_0 \|_{\L^\infty(\bd \Omega)} + \|T_0\|_{\infty} \| f_0 \|_{\infty}  \leq \| u_0 \|_{\L^\infty(\bd \Omega)} +  \frac{\| f \|_{\infty} }{\beta \cdot m_0} \;.
			\]
			For the total variation $|Du|(\Omega \wo \Sigma)$ we estimate both summands of $Du$ separately, beginning with
			\begin{align*}
				&\left| \int\limits_{\Omega}\;  \SP<  {\varphi (x)} , {c_0^{\perp} (x)} >  \;d\mu(x) \right| 
				 \; = \; \left| \int\limits_a^b \int\limits_{0}^{1}  \SP<  {\varphi \circ \xi} , {c_0^{\perp} \circ \xi}>   \; dt \; dD\gamma^*u_0(s) \right| \\
				& %\leq \;  \int\limits_a^b \int\limits_{0}^{1}  |c_0 \circ \xi|  \; dt \; d|D\gamma^*u_0|(s) \cdot \| \varphi \|_{\infty}
				 \leq \; \frac{1}{\beta \cdot m_0} \cdot \int\limits_a^b \; d|D\gamma^*u_0|(s) \cdot \| \varphi \|_{\infty} \; = \; \frac{|Du_0|}{\beta \cdot m_0} \cdot   \| \varphi \|_{\infty} \;.
			\end{align*}
			Clearly, the total variation of this summand is bounded by
			$\left|c_0^{\perp}(x) \cdot \mu \right|(\Omega \wo \Sigma) \leq \frac{|Du_0|(\bd \Omega)}{\beta \cdot m_0}$,
			which is the total variation of the boundary data times the bound on the arc lengths of characteristics (see lemma \ref{Lem:ArcLenBound} b) ).
			\smallskip
			
			The total variation of the second summand is exactly the $\L^1$-norm of $\nabla u_2$:
			\begin{align*}
				\int\limits_{\Omega} \;  | \nabla u_2 (x)|  \;dx  &= \int\limits_a^b \int\limits_{0}^{1} \;  | \nabla u_2 \circ \xi \cdot \det D\xi |   \;dt \; ds \\
				& =  \int\limits_a^b \int\limits_{0}^{1}  \left| \;  -\pd_s \xi^{\perp} \; \pd_t v_2 +\pd_t \xi^{\perp} \; \pd_s v_2 \; \right|   \; dt \; ds  \\
				& \leq   \int\limits_a^b \int\limits_{0}^{1}  |\pd_s \xi| \; |\pd_t v_2| \; dt \; ds + \int\limits_a^b \int\limits_{0}^{1} |\pd_t \xi| \; |\pd_s v_2|    \; dt \; ds
			\end{align*}
			This step is completed if the last two integrals are bounded. The partial derivative of $v_2$ w.r.t. $t$ is
			\[
				\pd_t v_2 = f_0 \circ \xi = \frac{f}{\left< c, \nabla T_0\right>}\circ \xi = f \circ \xi \cdot |\partial_t \xi| \;.
			\]
			Additionally, by using relation \refEq{eqn:RelDet} from corollary \ref{Cor:PolarCoord} c), we obtain the bound
			\begin{align*}
				\int\limits_a^b & \int\limits_{0}^{1}  |\pd_s \xi| \; |\pd_t v_2| \; dt \; ds 
				\; = \; \int\limits_a^b \int\limits_{0}^{1}  |\pd_s \xi| \; |\pd_t \xi| \; |f \circ \xi| \; dt \; ds \\
				& \leq \; \frac{1}{\beta}  \int\limits_a^b \int\limits_{0}^{1}  \det D\xi \; |f \circ \xi| \; dt \; ds 
				 = \; \frac{1}{\beta}  \int\limits_{\Omega}  |f(x)|  \;dx \leq \frac{1}{\beta} \cdot \|f\|_{\infty} \cdot \Lm^2(\Omega) 
			\end{align*}
			on the first integral. The partial derivative of $v_2$ w.r.t. $s$ is
			\[
				\pd_s v_2 = \int\limits_0^t \SP< {\nabla f_0 \circ \xi(\tau,s)} , {\pd_s \xi(\tau,s)} >  \; d\tau \;.
			\] 
			With this, we estimate the second integrand by
			\[
				|\pd_t \xi| \; |\pd_s v_2| \leq \frac{1}{\beta \cdot m_0} \cdot \int\limits_0^{1} \left| \SP< {\nabla f_0 \circ \xi(\tau,s)} , {\pd_s \xi(\tau,s)} > \right| \; d\tau \;.
			\]
			Because the latter is independent of $t$, we obtain
			\[
				\int\limits_a^b \int\limits_{0}^{1} |\pd_t \xi| \; |\pd_s v_2|    \; dt \; ds \leq \frac{1}{\beta \cdot m_0} \cdot 
				\int\limits_a^b \int\limits_0^{1}  \left| \SP< {\nabla f_0 \circ \xi(\tau,s)} , {\pd_s \xi(\tau,s)} > \right| \; d\tau \; ds\;.
			\]
			According to the definition of $f_0$ in equation \refEq{eqn:DeffNull}, we expand $\nabla f_0$ to
			\[
				\nabla f_0^T = \frac{1}{\SP<{c},{\nabla T_0}>} \left(\nabla f^T - \frac{f}{\SP<{c},{N}>} \left(N^T Dc + c^T \frac{D^2 T_0}{|\nabla T_0|}\right) \right) \;.
			\]
			Moreover, with $\pd_s \xi = N^{\perp} \circ \xi \cdot |\pd_s \xi|$ and $|\pd_t \xi| = 1/\SP<{c},{\nabla T_0}> \circ \xi$, we end up with
			\begin{align*}
				\SP< {\nabla f_0 \circ \xi} , {\pd_s \xi } > & = 
				|\pd_s \xi|  |\pd_t \xi| \left( \SP< {\nabla f} , {N^{\perp}} > - \frac{f}{\SP< {c}, {N}>} \cdot \left(N^T Dc N^{\perp} + c^T \frac{D^2 T_0}{|\nabla T_0|} N^{\perp}\right) \right)\circ \xi \:.
			\end{align*}
			Finally, by writing $N$ as $N = \nabla T_0 / |\nabla T_0|$, we have $DN = N^{\perp} \cdot N^{\perp T} \cdot D^2 T_0 /|\nabla T_0| $ and can express the summand involving $D^2 T_0$
			in terms of $DN$:
			\[
				N^{\perp T} \cdot DN \cdot c = N^{\perp T} \cdot \frac{D^2 T_0}{|\nabla T_0|} \cdot c = c^T \cdot \frac{D^2 T_0}{|\nabla T_0|} \cdot N^{\perp} \;.
			\]
			Hence, there is  the integrable upper bound 
			\[
				|\SP< {\nabla f_0 \circ \xi} , {\pd_s \xi} > | \leq  \frac{\det D\xi}{\beta}  \left( \| \nabla f\|_{\infty} + \frac{\|f\|_{\infty}}{\beta} ( | Dc | +  | DN |) \right)\circ \xi \; ,
			\]
			which implies
			\begin{align*}
				\int\limits_a^b \int\limits_{0}^{1} |\pd_t \xi|&  \, |\pd_s v_2|    \, dt \, ds  \leq  
				 \frac{\| \nabla f\|_{\infty}}{\beta^2 m_0}  \Lm^2(\Omega) + \frac{\|f\|_{\infty}}{\beta^3 m_0} \left( \| Dc \|_{L^1(\Omega)} +  \| DN \|_{L^1(\Omega)}\right)  \, .
			\end{align*}
			Combining the partial results we firstly get a bound on the $\L^1$-norm of $\nabla u_2$
			\begin{align*}
				\| \nabla u_2 \|_{L^1(\Omega)} \leq & \; \left( \frac{\|f\|_{\infty}}{\beta}  +  \frac{\| \nabla f\|_{\infty}}{\beta^2 m_0} \right) \cdot \Lm^2(\Omega)  
				 + \frac{\|f\|_{\infty}}{\beta^3 m_0} \left( \| Dc \|_{L^1(\Omega)} +  \| DN \|_{L^1(\Omega)}\right) \; ,
			\end{align*}
			and secondly learn that the total variation $|Du|(\Omega \wo \Sigma)$ is bounded by $M_{\Omega \wo \Sigma}$.
			\smallskip
		\item Now, we want to view $u$ as a $\BV$ function on the domain $\Omega$. Thus, test functions stem from $\C_c^1(\Omega)^2$ and we have to study the limit $\lambda \to 1$
			in equation \refEq{eqn:DuMeas1}.  In part a) we already have bounds on the total variation of the components concerning $c_0^{\perp}(x) \cdot \mu$ and $\nabla u_2(x) \cdot \Lm^2$,
			which do not depend on $\lambda$. Hence, these bounds stay the same, and we can concentrate on the remainder
			\begin{align*}
				\int\limits_{\Omega}    \SP<  {\varphi (x)} , {-N(x) } > \,  u(x) \;d\Hm^1 \rto \lambda(x) &
				 =   \int\limits_a^b   \SP< {\psi(\lambda,s)} , {\pd_s \xi^{\perp}(\lambda,s)} > \, v(\lambda,s)  \;ds \;.
			\end{align*}
			%% Skizze
			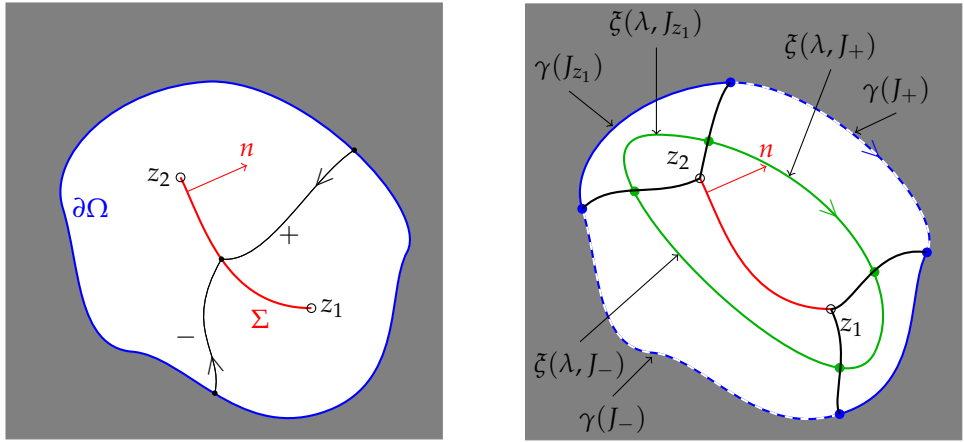
\begin{figure}[t]
			\begin{minipage}{0.47\textwidth}
			\begin{tikzpicture}[scale=5.8]
				\filldraw[gray] (0,0) rectangle (1,1);
				\filldraw[white] (0.13,0.53) .. controls (0.093,0.65) and (0.25,0.81) .. (0.47,0.82)
				 .. controls (0.69,0.82) and (0.97,0.53) .. (0.92,0.43)
				 .. controls (0.87,0.34) and (0.92,0.12) .. (0.72,0.059)
				 .. controls (0.53,0) and (0.41,0.19) .. (0.29,0.2)
				 .. controls (0.16,0.21) and (0.17,0.4) .. (0.13,0.53);
				\draw[blue,thick] (0.13,0.53) .. controls (0.093,0.65) and (0.25,0.81) .. (0.47,0.82)
				 .. controls (0.69,0.82) and (0.97,0.53) .. (0.92,0.43)
				 .. controls (0.87,0.34) and (0.92,0.12) .. (0.72,0.059)
				 .. controls (0.53,0) and (0.41,0.19) .. (0.29,0.2)
				 .. controls (0.16,0.21) and (0.17,0.4) .. (0.13,0.53) node[anchor=west]{$\pd \Omega$} ; %boundary
				
				%\draw[black] (0.4,0.6)--(0.45,0.5) -- (0.5,0.3) -- (0.7,0.3);
				\draw[red,thick] (0.4,0.6).. controls (0.45,0.5) and (0.5,0.3) ..  (0.7,0.3); %sigma 
				\draw (0.4,0.6) node[anchor=east]{$z_2$} circle (0.1mm);
				\draw (0.7,0.3) node[anchor=west]{$z_1$} circle (0.1mm);
				\draw[red] (0.625,0.275) node[anchor=east]{$\Sigma$};
				
				%char top-bd
				\draw (0.4938 , 0.4125) .. controls (0.6 , 0.4) and (0.7 , 0.6).. node[below]{$+$} (0.7982 , 0.6625); 
				\draw (0.4938 , 0.4125) .. controls (0.6 , 0.4) and (0.7 , 0.6).. node[sloped,near end]{$<$}(0.7982 , 0.6625);
				%\draw[->,green]  (0.4938 , 0.4125) -- (0.3448 ,   0.4300) node[anchor=east]{$c^+$};
				
				%char bot-bd
				\draw (0.4938 , 0.4125) .. controls (0.4 , 0.25) and (0.5 , 0.2).. node[below,anchor=east]{$-$}(0.4786 , 0.1051); 
				\draw (0.4938 , 0.4125) .. controls (0.4 , 0.25) and (0.5 , 0.2).. node[sloped,near end]{$<$}(0.4786 , 0.1051); 
				%\draw[->,green]  (0.4938 , 0.4125) -- (0.5687  ,  0.5424) node[anchor=south]{$c^-$};
				
				%Normals
				%\draw[->,red]  (0.4938 , 0.4125)--(0.6166,0.4985) node[anchor=south]{$N^-$};
				%\draw[->,red]  (0.4938 , 0.4125)--(0.3709,0.3265) node[anchor=south]{$N^+$};
				\draw[->,red]  (0.4152 , 0.5673)--(0.5524, 0.6277) node[anchor=south]{$n$};
				
				\filldraw (0.4938 , 0.4125) circle (0.05mm); %point on sigma
				\filldraw (0.7982 , 0.6625) circle (0.05mm); %point on top-bd
				\filldraw (0.4786 , 0.1051) circle (0.05mm); %point on bot-bd
			\end{tikzpicture}
			\end{minipage}
			\hfill
			\begin{minipage}{0.47\textwidth}
			\begin{tikzpicture}[scale=5.8]
				\colorlet{darkgreen}{green!70!black}
				
				\filldraw[gray] (0,0) rectangle (1,1);
				\filldraw[white] (0.13,0.53) .. controls (0.093,0.65) and (0.25,0.81) .. (0.47,0.82)
				 .. controls (0.69,0.82) and (0.97,0.53) .. (0.92,0.43)
				 .. controls (0.87,0.34) and (0.92,0.12) .. (0.72,0.059)
				 .. controls (0.53,0) and (0.41,0.19) .. (0.29,0.2)
				 .. controls (0.16,0.21) and (0.17,0.4) .. (0.13,0.53);
				\draw[blue,thick] (0.13,0.53) .. controls (0.093,0.65) and (0.25,0.81) .. (0.47,0.82)
				 .. controls (0.69,0.82) and (0.97,0.53) .. node[sloped]{$>$} (0.92,0.43)
				 .. controls (0.87,0.34) and (0.92,0.12) .. (0.72,0.059)
				 .. controls (0.53,0) and (0.41,0.19) .. (0.29,0.2)
				 .. controls (0.16,0.21) and (0.17,0.4) .. (0.13,0.53); % node[anchor=west]{$\partial \Omega$} ; %boundary

				\draw[white,thick, dashed] (0.47,0.82) .. controls (0.69,0.82) and (0.97,0.53) .. (0.92,0.43); % \gamma_+
				\filldraw[blue] (0.47,0.82) circle (0.1mm); % Endknoten \gamma_+
				\filldraw[blue] (0.92,0.43) circle (0.1mm);

				\draw[white,thick, dashed] (0.72,0.059) .. controls (0.53,0) and (0.41,0.19) .. (0.29,0.2) .. controls (0.16,0.21) and (0.17,0.4) .. (0.13,0.53); % \gamma_-
				\filldraw[blue] (0.72,0.059) circle (0.1mm); % Endknoten \gamma_-
				\filldraw[blue] (0.13,0.53) circle (0.1mm);
				
				%\draw[black] (0.4,0.6)--(0.45,0.5) -- (0.5,0.3) -- (0.7,0.3);
				\draw[red,thick] (0.4,0.6).. controls (0.45,0.5) and (0.5,0.3) .. (0.7,0.3); %sigma
				\draw (0.4,0.6) node[anchor=south east]{$z_2$} circle (0.1mm);
				\draw (0.7,0.3) node[anchor=north west]{$z_1$} circle (0.1mm);

				% \lambda-level
				\draw[color= darkgreen,thick] (0.3 , 0.7).. controls (0.6,0.7) and (0.9,0.4) .. node[sloped]{$>$} (0.8 , 0.2);
				\draw[color= darkgreen,thick] (0.3 , 0.7)  .. controls (0,0.7) and (0.7,0) .. (0.8 , 0.2);
												
				\filldraw[color= darkgreen] (0.42,0.685) circle (0.1mm); % Endknoten \xi_+
				\filldraw[color= darkgreen] (0.8,0.385) circle (0.1mm);
				
				\filldraw[color= darkgreen] (0.25,0.57) circle (0.1mm); % Endknoten \xi_-
				\filldraw[color= darkgreen] (0.72,0.165) circle (0.1mm);
				
				\draw[->] (0.3,0.9) node[anchor=south]{$\xi(\lambda,J_{z_1})$} --(0.3 , 0.7); %pointer auf xi_z1
				\draw[->] (0.1,0.8) node[anchor=south]{$\gamma(J_{z_1})$} --(0.2 , 0.72); %pointer auf gamma_z1
				
				\draw[->] (0.85,0.75) node[anchor=south]{$\gamma(J_{+})$} --(0.755 , 0.705); %pointer auf gamma_+
				\draw[->] (0.7,0.85) node[anchor=south]{$\xi(\lambda, J_{+})$} --(0.605, 0.605); %pointer auf xi_+
				
				\draw[->] (0.2,0.1) node[anchor=north]{$\gamma(J_{-})$} --(0.29 , 0.19); %pointer auf gamma_-
				\draw[->] (0.125,0.225) node[anchor=north]{$\xi(\lambda,J_{-})$} --(0.35 , 0.425); %pointer auf xi_-
				
				% hit at z_1
				\draw[thick] (0.4,0.6) .. controls (0.3 , 0.55) and (0.2 , 0.6).. (0.13,0.53);  
				\draw[thick] (0.4,0.6) .. controls (0.43 , 0.75) and (0.45 , 0.8).. (0.47,0.82);
								
				% hit at z_2
				\draw[thick] (0.7,0.3) .. controls (0.75 , 0.2) and (0.7 , 0.1).. (0.72,0.059);
				\draw[thick] (0.7,0.3) .. controls (0.75 , 0.3) and (0.8 , 0.45).. (0.92,0.43);
				
				\draw[->,red]  (0.4152 , 0.5673)--(0.5524, 0.6277) node[anchor=south]{$n$};  % normale Sigma

			\end{tikzpicture}
			\end{minipage}
			\caption{Decomposition of the level line $\{T_0 = \lambda\}=\xi(\lambda,\dotarg)$ into 4 parts.
			The left image shows the domain $\Omega$ in white, the boundary $\bd \Omega$ in blue, and in red the stop set $\Sigma$ with terminal nodes $z_1$ and $z_2$ .
			The red vector $n$ is the normal of $\Sigma$ and defines the plus and the minus side. The black curves are two characteristics, both start at $\bd \Omega$, but
			the upper one approaches $\Sigma$ from the plus side while the other comes from the minus side. 
			The right image illustrates the decomposition of $\xi(\lambda,\dotarg)$, the green curve. In black we have characteristics which hit the terminal nodes. For
			each node only the ''first'' and ''last'' one of the characteristics hitting the node are painted.
			In solid blue we have the parts $\gamma(J_{z_1})$ and $\gamma(J_{z_2})$ which comprise all the start points for which characteristics hit terminal nodes of $\Sigma$.
			The complement is painted dashed blue and consists of the two parts $\gamma(J_+)$ and $\gamma(J_-)$. Here, $\gamma(J_+)$ ($\gamma(J_-)$) contains all start points for which characteristics
			approach $\Sigma$ from the plus side (minus side) and end in the relative interior of $\Sigma$. With parameter sets $J_{z_1}$, $J_{z_2}$, $J_+$, and $J_-$,
			that correspond to the decomposition of the boundary curve $\gamma$, we obtain
			an analogous decomposition of the level line $\{T_0 = \lambda\}=\xi(\lambda,\dotarg)$ by $\xi(\lambda,J_{z_1})$, $\xi(\lambda,J_{z_2})$, $\xi(\lambda,J_+)$, $\xi(\lambda,J_-)$.
			If now $\lambda \to 1$, i.e., $\{T_0 = \lambda\} \to \Sigma$, the curves $\xi(1,J_+)$ and $\xi(1,J_-)$ remain and are two parametrizations of $\Sigma$.
			} \label{Fig:PolarCoord}
			\end{figure}
			
			In corollary \ref{Cor:PolarCoord} d) we introduced a partition of $I$, the domain of $\gamma$, such that $\xi(1,.)|_{J_{k,+}}$ and $\xi(1,.)|_{J_{k,-}}$
			are both regular parametrizations of $\inner{\Sigma}_k$, one for the plus side and one for the minus side of $\inner{\Sigma}_k$, i.e.,
			\[
				I = \bigcup\limits_{k=1}^n \left( J_{k,+} \cup J_{k,-} \right) \; \cup \; J \;.
			\]
			The remaining part $J$ comprises all $s$ for which the characteristic $\xi(\lambda,s)$ hits a singular node of $\Sigma$, i.e., a
			terminal-, branching- or kink node, as $\lambda$ tends to 1. As before, let $z_1,\ldots,z_m$ denote the singular nodes of $\Sigma$. 
			Then we partition $J$ into a collection $J_{z_1},\ldots, J_{z_m}$ by:
			\[
				s \in J_{z_l} \quad :\Leftrightarrow \quad \lim\limits_{\lambda \to 1} \xi(\lambda,s) = z_l \;,\qquad J = \bigcup\limits_{l=1}^{m} J_{z_l} \;.
			\]
			For an illustration of this decomposition, see figure \ref{Fig:PolarCoord}. 
			
			By these partitions, we decompose the integral
			\begin{align*}
				\int\limits_a^b  & \SP< {\psi(\lambda,s)} , {\pd_s \xi^{\perp}(\lambda,s)} > \, v(\lambda,s)  \;ds 
				= \; \sum\limits_{l=1}^{m}  \int\limits_{J_{z_l}}  \ldots \; ds 
				 \; + \sum\limits_{k=1}^n \left( \; \int\limits_{J_{k,+}}   \ldots \; ds + \int\limits_{J_{k,-}}  \ldots \; ds \right) \;.
			\end{align*}
			For the $J_{z_l}$-summands, we have the estimate
			\begin{align*}
				& \left| \; \int\limits_{J_{z_l}}  \SP< {\psi(\lambda,s)} , {\pd_s \xi^{\perp}(\lambda,s)} > \, v(\lambda,s)  \;ds \right|
				= \left| \; \int\limits_{\xi(\lambda,J_{z_l} )}    (\SP<  {\varphi } , {-N } > \cdot  u)(x) \;d\Hm^1(x) \right| \\
				& \qquad \leq \|\varphi\|_{\infty} \cdot \|u\|_{\L^{\infty}(\Omega)} \cdot \Hm^1(\; \xi(\lambda,J_{z_l} )\;) \;.
			\end{align*}
			Because the arc $\xi(\lambda,J_{z_l})$ degenerates to the single point $z_l$ (see figure \ref{Fig:PolarCoord}), the right-hand side becomes zero in the limit
			\[
				\lim\limits_{\lambda \to 1} \Hm^1(\; \xi(\lambda,J_{z_l} )\;) = 0 \;, 
			\]
			and the contribution of those summands vanishes.
			
			For the remaining summands we perform only those which go along $\xi(\lambda, J_{k,+})$, since the same argumentation applies to the others
			which go along $\xi(\lambda, J_{k,-})$. For $\xi(1,.):J_{k,+} \to \inner{\Sigma}_k$ the tangent is given by the limit
			\[ 
				\pd_s \xi(1,s) = \lim\limits_{\lambda \to 1} \pd_s \xi(\lambda,s)  \qquad s \in J_{k,+} \;.
			\]
			By requirement \ref{Req:TimeFuncSigma} part 2, the field of normals extends to $\inner{\Sigma}_k$. This means
			\[
				\lim\limits_{\lambda \to1} -N \circ \xi(\lambda,s) = n_k \circ \xi(1,s) \qquad s \in J_{k,+} \;.
			\]
			Finally, the second component $s(x)$ of $\xi^{-1}(x)$ (see corollary \ref{Cor:PolarCoord} b) ) extends one-sided onto $\inner{\Sigma}_k$, and so does $u$.
			Let $z \in \inner{\Sigma}_k$, with corresponding $s^+(z) \in J_{k,+}$. Then we set
			\begin{align}\label{eqn:ExtendSol}
				u_k^+&(z) := \lim\limits_{\lambda \to 1} v(\lambda,s^+(z)) = v(1,s^+(z))  \;.
			\end{align}
			Conversely, the following relation holds,
			\begin{align*}
				u_k^+&\circ \xi(1,s) = \lim\limits_{\lambda \to 1} u \circ \xi(\lambda,s) =  v(1,s) \;, \qquad s \in J_{k,+} \;.
			\end{align*}		
			Now, we can turn to the limit. For abbreviation let
			\[
				h(\lambda,s) := (\SP< {\varphi}, {-N} > \cdot u) \circ \xi(\lambda,s) \;.
			\]
			Then we have
			\begin{align*}
			& \left| \; \int\limits_{\xi(\lambda,J_{k,+})} (\SP<  {\varphi } , {-N } > \cdot  u)(x) \; d\Hm^1(x) 
			- \int\limits_{\Omega} (\SP<  {\varphi } , {n_k } > \cdot  u_k^+ )(x) \; d\Hm^1\rto \Sigma_k(x) \right| \\
			& = \; \left| \; \int\limits_{J_{k,+}} h(\lambda,s) \cdot |\pd_s \xi(\lambda,s)| \;ds - \int\limits_{J_{k,+}} h(1,s) \cdot |\pd_s \xi(1,s)| \;ds \right| \\
			%& \leq \; \int\limits_{J_{k,+}}  |h(1,s) - h(\lambda,s)| \cdot  |\pd_s \xi(1,s)|  +  ||\pd_s \xi(1,s)| - |\pd_s \xi(\lambda,s)|| \cdot |h(\lambda,s)| \; ds \\
			& \leq \; \int\limits_{J_{k,+}}  |h(1,s) - h(\lambda,s)| \cdot  |\pd_s \xi(1,s)| \; ds \\
			& \qquad\quad 
			+ \quad \|h \circ \xi^{-1}\|_{\L^{\infty}(\Omega)} \int\limits_{J_{k,+}}   ||\pd_s \xi(1,s)| - |\partial_s \xi(\lambda,s)||  \; ds \;.
			\end{align*}	
			By the extensions of $N$ and $u$ the product $|h(1,s) - h(\lambda,s)| \cdot  |\pd_s \xi(1,s)|$ tends to zero for  every $s \in  J_{k,+}$. Furthermore, it has the integrable bound
			\[
				|h(1,s) - h(\lambda,s)| \cdot  |\pd_s \xi(1,s)| \leq 2\|h \circ \xi^{-1}\|_{\L^{\infty}(\Omega)} \cdot  |\pd_s \xi(1,s)| \;.
			\]
			Thus, by dominated convergence, the corresponding integral vanishes as $\lambda \to 1$. By the same argument the second integral tends to zero, too.
			\smallskip
			
			Summarizing, this means 
			\begin{align*}
				& \int\limits_{\xi(\lambda,J_{k,+})} \SP<  {\varphi } , {-N } > \cdot  u \; d\Hm^1(x)
				\to \int\limits_{\Omega} \SP<  {\varphi } , {n_k } > \cdot  u_k^+  \; d\Hm^1\rto \Sigma_k(x)  \; ,\\
				& \int\limits_{\xi(\lambda,J_{k,-})} \SP<  {\varphi } , {-N } > \cdot  u \; d\Hm^1(x)
				\to \int\limits_{\Omega} \SP<  {\varphi } , {-n_k } > \cdot  u_k^-  \; d\Hm^1\rto \Sigma_k(x)  \; ,
			\end{align*}
			as $\lambda \to 1$, and together we obtain the jump part for $\Sigma_k$
			\begin{align*}
				\int\limits_{\xi(\lambda,J_{k,+}) \cup \xi(\lambda,J_{k,-})} & \SP<  {\varphi } , {-N } > \cdot  u  \; d\Hm^1(x) \;  \to %\\ 
				%& 
				\int\limits_{\Omega} (u_k^+-u_k^-) \cdot \SP<  {\varphi } , {n_k } > \; d\Hm^1\rto \Sigma_k(x) \;.
			\end{align*}
			The bound on the total variation of the jump part is
			\[
				\left| \; \int\limits_{\Omega} (u_k^+-u_k^-) \; \SP<  {\varphi } , {n_k } > \; d\Hm^1\rto \Sigma_k(x)  \right| \leq 2 \cdot \|u\|_{\L^{\infty}(\Omega)}  \cdot \Hm^1(\Sigma_k) \;,
			\]
			whereas $\|\varphi\|_{\infty} \leq 1$.
			What remains to show is that the one-sided limits $u_k^+,u_k^-$ defined above are in fact the $\BV$-traces
			$u_{\Sigma_k}^+$ and $u_{\Sigma_k}^-$ of $u$ on $\Sigma_k$. This is true, but we postpone this point to lemma \ref{Lem:SRS}.
					
			Clearly, the complete additional jump part is given by 
			\[
				\sum\limits_{k=1}^n \; \int\limits_{\Omega} (u_k^+(x)-u_k^-(x)) \; \SP<  {\varphi } , {n_k } >(x) \; d\Hm^1\rto \Sigma_k(x)
			\]
			with $2\|u\|_{\L^{\infty}(\Omega)} \cdot \Hm^1(\Sigma)$ as a bound on its total variation.
			We have shown that $u$ is in fact an element of $\BV(\Omega)$ with $\|u\|_{\BV(\Omega)}$ bounded by the given data, and its derivative measure reads
			\[
				Du = \sum\limits_{k=1}^n (u_k^+(x)-u_k^-(x))\; n_k(x) \cdot \Hm^1 \rto \Sigma_k \; + \; c_0^{\perp}(x) \cdot \mu \; + \; \nabla u_2(x) \cdot \Lm^2 \;.
			\]
	\end{enumerate}
	\hfill
\end{proof}
\medskip

Theorem \ref{Theo:ElemBV} part a) shows that the candidate solution $u$ belongs to $\BV(\Omega \wo \Sigma)$ and the derivative $Du$,
on $\Omega \wo \Sigma$,  is given by $Du = c_0^{\perp}(x) \cdot \mu \; + \; \nabla u_2(x) \cdot \Lm^2$.
From orthogonality, we infer 
\begin{align*}
	\SP< {c_0(x)}, {Du} > &= \SP< {c_0(x)},{\nabla u_2(x)}> \cdot \Lm^2 \;, \quad \mbox{in} \quad \Omega \wo \Sigma
\end{align*}
and as in the classical case, one checks that $\SP< {c_0(x)},{\nabla u_2(x)}> = f_0(x)$; the relation 
between backward and forward characteristics according to equation \refEq{eqn:RelFBChar} is $\xi(T_0(x)-t,s(x)) = \eta(t,x)$. Now, substitute $x = \xi(\tau,s)$ to get
$\xi(\tau-t,s) = \eta(t,\xi(\tau,s))$, differentiate w.r.t. $\tau$, resubstitute and obtain the relation
\begin{equation}\label{eqn:BCharDiff}
	-\eta'(t,x) = D_x \eta(t,x) \cdot c_0(x) \;.
\end{equation}
The description \refEq{eqn:SolOrig} of $u_2$, equation \refEq{eqn:BCharDiff}, and the equality $\SP< {c_0} , {\nabla T_0} > = 1$ imply
\begin{align*}
	\SP< {c_0(x)}, {\nabla u_2(x)} > &= f_0 \circ \eta(T_0(x),x) - \int\limits_0^{T_0(x)} \SP< {\nabla f_0 \circ \eta(\tau,x)}, {\eta'(\tau,x)} > \; d\tau = f_0(x) \;.
\end{align*}
Consequently, the candidate $u$ solves the PDE of problem \refEq{eqn:LinProblem}.

In the next lemma we study the traces of the candidate $u$ along level lines of $T_0$ and observe thereby that $u$ satisfies the
Dirichlet boundary condition of problem \refEq{eqn:LinProblem} as $BV$-trace.
\medskip

\begin{lem} \label{Lem:SRS}
	(Start / restart / stop)
	\begin{enumerate}[a)]
		\item (start): $u$ satisfies the boundary condition $u|_{\bd \Omega} = u_0$ as $\BV$ boundary trace, i.e.,
			\[ 
				\lim\limits_{r \to 0+} \frac{1}{r^2} \int\limits_{\Omega \cap B_r(z)} |u_0(z) - u(x)| \; dx = 0
			\]
			for every Lebesgue point $z \in \bd \Omega$ of $u_0$.
			\medskip
		\item (restart): for every $z \in \Omega \wo \Sigma$ that corresponds to a Lebesgue point $z'$ of $u_0$, that means $z'=\gamma(s(z))$
			is a Lebesgue point of $u_0$, we have
			\begin{align*}
				\lim\limits_{r \to 0+} \frac{1}{r^2} \int\limits_{B^>_r(z)} |u(z) - u(x)| \; dx &= 0 \; & \mbox{and} \quad
				\lim\limits_{r \to 0+} \frac{1}{r^2} \int\limits_{B^<_r(z)} |u(z) - u(x)| \; dx &= 0 \;.
			\end{align*}
			Here, $B^<_r(z)$ and $B^>_r(z)$ -- for $r$ small enough -- denote the cut-off disks
			\begin{align*}
				B^<_r(z) &:= \{x \in B_r(z) : T_0(x) < T_0(z)\} \;, &
				B^>_r(z) &:= \{x \in B_r(z) : T_0(x) > T_0(z)\} \;.
			\end{align*}
			\smallskip
			
			Let $\Gamma := \chi_{T_0=\lambda}$ a $\lambda$-level of $T_0$ for some $0 < \lambda <1$ and let $\Gamma$ be oriented by $N|_{\Gamma}$.
			The result above means that the traces $u_{\Gamma}^+$ and $u_{\Gamma}^-$ are identical, thus the restriction $u|_{\Gamma}$ is well-defined.
			Moreover, we have $u|_{\Gamma} \in \BV(\Gamma)$.
			\medskip
		\item (stop): for every $z \in \inner{\Sigma}_k$ that corresponds to a Lebesgue point $z'$ of $u_0$ w.r.t. the plus side of $\inner{\Sigma}_k$, that means $z'=\gamma(s^+(z))$
			is a Lebesgue point of $u_0$,  we have
			\[ 
				\lim\limits_{r \to 0+} \frac{1}{r^2} \int\limits_{B^+_r(z)} |u^+_k(z) - u(x)| \; dx = 0 \;.
			\]
			Here, $u^+_k(z)$ is defined by equation \refEq{eqn:ExtendSol} and $B^+_r(z)$ -- for $r$ small enough -- is the cut-off disk centered at $z$, restricted to the plus side.		
			Hence, the trace $u_{\Sigma_k}^+$ is given by $u^+_k$. Moreover, we have $u^+_k \in \BV(\inner{\Sigma}_k)$.			
			The analogous result holds true w.r.t. the minus side.
	\end{enumerate}
\end{lem}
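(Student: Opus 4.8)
The plan is to carry out everything in the characteristic coordinates $(t,s)$ of corollary \ref{Cor:PolarCoord}, where the candidate reads $u\circ\xi = v = v_1 + v_2$ with $v_1(t,s) = \gamma^*u_0(s)$ depending on $s$ only and $v_2(t,s) = \int_0^t f_0\circ\xi(\tau,s)\,d\tau$. The single fact driving all three parts is that $v_2$ is jointly continuous on the closed rectangle $[0,1]\times[a,b]$: since $\SP<{c},{\nabla T_0}> = \SP<{c},{N}>\,|\nabla T_0| \geq \beta\cdot m_0$ and $|\nabla T_0|$ blows up at $\Sigma$ (lemma \ref{Lem:ArcLenBound} a)), $f_0$ is bounded by $\|f\|_\infty/(\beta m_0)$ and extends continuously to $\cl\Omega$ (by $0$ on $\Sigma$); as $t\mapsto\xi(t,s)$ is continuous up to $t=1$ and $s\mapsto\xi(t,s)$ is continuous, dominated convergence yields joint continuity of $v_2$. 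Consequently $u_2 = v_2\circ\xi^{-1}$ extends continuously to $\bd\Omega$ (with value $v_2(0,\dotarg)\equiv 0$) and one-sidedly to each $\inner{\Sigma}_k$ (with value $v_2(1,s^\pm(\dotarg))$, using that $s(x)$ extends one-sidedly onto $\inner{\Sigma}_k$, cf.\ \refEq{eqn:ExtendSol}). A continuous function equals its Lebesgue value, and the cut-off disks $B^<_r(z),B^>_r(z),B^+_r(z)$ all have $\Lm^2$-measure $\leq\pi r^2$, so in each of the three statements the $u_2$-part of the averaged integral tends to $0$ automatically; it remains only to treat $u_1 = v_1\circ\xi^{-1}$, i.e.\ to show that the relevant average of $|\gamma^*u_0(s(z)) - \gamma^*u_0(s(x))|$ shrinks to $0$.

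In parts a) and b) the point $z$ lies away from $\Sigma$ (and, choosing the generator $\gamma|_I$ suitably, off the seam $S$), so by corollary \ref{Cor:PolarCoord} the map $\xi$ is a $\C^1$-diffeomorphism near $z=\xi(t_0,s_0)$ with $|\det D\xi|$ bounded above and below; for part a), $t_0=0$ and $\xi$ extends as a $\C^1$ map up to $\{t=0\}$ with $\det D\xi>0$ there. Hence for small $r$ the preimage $\xi^{-1}(\Omega\cap B_r(z))$ --- respectively $\xi^{-1}(B^<_r(z))$, $\xi^{-1}(B^>_r(z))$ --- is trapped in the box $[t_0-Cr,t_0+Cr]\times[s_0-Cr,s_0+Cr]$ intersected with $\{t\geq 0\}$, resp.\ $\{t<t_0\}$, $\{t>t_0\}$, on which $|\det D\xi|$ is bounded. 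Enlarging the domain of integration to this box and changing variables, the $u_1$-average is dominated, up to the bound on $|\det D\xi|$, by a fixed constant times the mean of $|\gamma^*u_0(s_0)-\gamma^*u_0(s)|$ over $[s_0-Cr,s_0+Cr]$, and this tends to $0$ because $s_0$ is a Lebesgue point of $\gamma^*u_0$ --- exactly what the hypotheses of a) (with $z=\gamma(s_0)$) and b) (with $z'=\gamma(s_0)$, transported via the bi-Lipschitz $\gamma$) assert. This proves a) and the two limits in b). For the remaining claims of b): near $z\in\Gamma=\chi_{T_0=\lambda}$ the level line $\Gamma$ is a $\C^1$ curve splitting $B_r(z)$, up to an $\Lm^2$-null correction, into $B^<_r(z)$ and $B^>_r(z)$, so the one-sided $\BV$-traces $u_\Gamma^\pm(z)$ are precisely the two limits just established and therefore coincide (with $u(z)$); thus $u|_\Gamma$ is well-defined, and $u|_\Gamma\circ\xi(\lambda,\dotarg) = \gamma^*u_0 + v_2(\lambda,\dotarg)$ lies in $\BV$, since $\gamma^*u_0\in\BV(\bd\Omega)$ and $v_2(\lambda,\dotarg)$ is Lipschitz in $s$ (on the compact set $[0,\lambda]\times[a,b]$, which $\xi$ maps into $\cl\Omega\wo\Sigma$, the integrand of $\partial_s v_2 = \int_0^\lambda\SP<{\nabla f_0\circ\xi},{\partial_s\xi}>\,d\tau$ is bounded).

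The main obstacle is part c), where $z\in\inner{\Sigma}_k$ and the chart $\xi$ degenerates as $t\to 1$: the $T_0$-time parametrization slows the characteristics to zero Euclidean speed at $\Sigma$ ($|\partial_t\xi|=1/\SP<{c},{\nabla T_0}>\to 0$), so $\det D\xi\to 0$ and the box-trapping of the previous paragraph fails. I would remove this degeneracy by reparametrizing the same characteristics near $\inner{\Sigma}_k$ by Euclidean arclength, i.e.\ by the flow of the unit field $c$. By requirement \ref{Req:TransportField} part 1b) this flow extends $\C^1$ onto $\inner{\Sigma}_k$ from the plus side, and by requirement \ref{Req:TransportField} part 2c) together with $N^+=-n_k$ the characteristics meet $\inner{\Sigma}_k$ transversally ($\SP<{c^+},{n_k}>\leq-\beta<0$); hence the arclength $L(s)$ at which characteristic $s$ reaches $\Sigma_k$ is $\C^1$ by the implicit function theorem, and the map $(s,\rho)\mapsto\tilde\xi(s,\rho):=$ (the point at arclength $L(s)+\rho$ on characteristic $s$) has nonsingular differential at $\rho=0$ (where $\partial_\rho\tilde\xi=c^+$ and $\partial_s\tilde\xi$ is tangent to $\Sigma_k$, independent by transversality), so it is a $\C^1$-diffeomorphism from a half-neighborhood $\{|s-s^+(z)|<\delta,\ -\delta<\rho\leq 0\}$ onto a relatively open plus-side neighborhood of $z$, with $|\det D\tilde\xi|$ bounded above and below. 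In these coordinates $u_1$ is still the function $\gamma^*u_0(s)$ of $s$ alone, $B^+_r(z)$ pulls back into a box $[s^+(z)-Cr,s^+(z)+Cr]\times[-Cr,0]$, and the argument of the previous paragraph applies verbatim, using that $s^+(z)$ is a Lebesgue point of $\gamma^*u_0$. (Alternatively one stays with $\xi$: change variables, use Fubini, bound $\det D\xi\leq|\partial_t\xi|\,|\partial_s\xi|\leq M|\partial_t\xi|$ near $\inner{\Sigma}_k$ via \refEq{eqn:RelDet}, and check through the asymptotics of requirement \ref{Req:TimeFuncSigma} part 1 --- the input behind lemma \ref{Lem:ArcLenBound} --- that both $\int_t\det D\xi(t,s)\,dt$ over $\xi^{-1}(B^+_r(z))$ and the transversal $s$-extent of $\xi^{-1}(B^+_r(z))$ are $\O(r)$, reducing again to the Lebesgue-point limit.) Finally, $u_{\Sigma_k}^\pm = u_k^\pm$ holds by the definition of the one-sided $\BV$-trace and the limit just shown, and $u_k^+\in\BV(\inner{\Sigma}_k)$ because $u_k^+\circ\xi(1,\dotarg) = v(1,\dotarg) = \gamma^*u_0 + v_2(1,\dotarg)$ on $J_{k,+}$, with $v_2(1,\dotarg)$ absolutely continuous in $s$ thanks to the integrable bound on $\SP<{\nabla f_0\circ\xi},{\partial_s\xi}>$ derived in the proof of theorem \ref{Theo:ElemBV} a); the minus side is analogous.
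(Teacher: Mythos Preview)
Your proof is correct and follows essentially the same approach as the paper: pull everything back to characteristic coordinates, reduce the question to a one-dimensional Lebesgue-point limit for $\gamma^*u_0$, and for part~c) cure the degeneracy of $\xi$ at $\Sigma$ by reparametrizing the same characteristics via the arclength flow of $c$ (the paper phrases this as ``the local diffeomorphism induced by $y'=-c(y)$, $y(0,s)=\xi(1,s)$''). Your organization differs only in that you split off the $u_2$-part up front via joint continuity of $v_2$, whereas the paper keeps $v=v_1+v_2$ together and records $|v_2(t,s)-v_2(\tau,\sigma)|=\O(s-\sigma)+\O(t-\tau)$; and you spell out more explicitly why the arclength chart near $\inner\Sigma_k$ is a genuine $\C^1$-diffeomorphism (transversality $\SP<{c^+},{n_k}>\leq -\beta$, implicit function theorem for $L(s)$) and why $u_k^+\in\BV(\inner\Sigma_k)$.
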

\medskip

\begin{proof} 
	Let $z \in \cl{\Omega} \wo \Sigma$ and let $(\tau,\sigma)$ be its characteristic coordinates, i.e., $z = \xi(\tau,\sigma)$.
	First, we discuss part b) and in particular the case of $B^>_r(z)$. Choose $r > 0$ suitably small. By changing variables we get
	\begin{align*}
		\frac{1}{r^2} \int\limits_{B^>_r(z)}  |u(z) - u(x)| \; dx 
		 &= \frac{1}{r^2} \int\limits_{s_-(r)}^{s_+(r)} \int\limits_ \tau ^{t_+(r,s)} |v(\tau,\sigma) - v(t,s)| \det D\xi(t,s)\; dt \; ds \;.
	\end{align*}
	The transformed integrand can be estimated by 
	\[
		|v(t,s) - v(\tau,\sigma)| \leq |\gamma^*u_0(s) - \gamma^*u_0(\sigma)|  +  \O(s-\sigma) + \O(t-\tau)\;,
	\]
	because for the parts $v_2$ of $v$ which concern the right-hand side $f$, we have
	\begin{align*}
		& |v_2(t,s) - v_2(\tau,\sigma)| = \left| \int\limits_0^t f_0 \circ \xi (h,s) \; dh - \int\limits_0^{\tau} f_0 \circ \xi (h,\sigma) \; dh \right| \leq \\
		& \int\limits_0^t |\nabla f_0 \circ \xi (h,s_*)|\, |\pd_s\xi (h,s_*)| \, |s-\sigma| \; dh + \left| \int\limits_{\tau}^t f_0 \circ \xi (h,\sigma) \; dh \right| = \O(s-\sigma) + \O(t-\tau) .
	\end{align*}
	As the determinant $\det D\xi$ is continuous and positive, we estimate it by a constant.
	Finally, any level line of $T_0$ is a regular $\C^1$ curve and so the cut-off disk $B^>_r(z)$ tends to a half disk, oriented along the tangent $\pd_s \xi(\tau,\sigma)$ of this curve.
	This implies the asymptotic formulas  
	\begin{align*}
		s_-(r) &= \sigma - \O(r) & s_+(r) &= \sigma + \O(r) & t_+(r,s) &= \tau + \O(r) \;.
	\end{align*}
	Armed with the last considerations, we have
	\begin{align*}
		\frac{1}{r^2} \int\limits_{B^>_r(z)}  |u(z) - u(x)| \; dx 
		& \leq \; \frac{C}{r} \int\limits_{s_-(r)}^{s_+(r)} |\gamma^*u_0(s) - \gamma^*u_0(\sigma)| \; ds  + \O(r) \; .
	\end{align*}
	The latter expression, as $r \to 0$, tends to zero for any Lebesgue-point $\sigma$ of $\gamma^*u_0$, i.e., for any Lebesgue-point $z' = \gamma(\sigma)$ of $u_0$.
		
	So far, we have got the assertion for the case  of $B^>_r(z)$. In the case  of $B^<_r(z)$
	the one and only difference is that, after having changed variables, 
	we have to integrate the $t$-variable over the interval $\IV[{t_-(r,s)},\tau]$ and to perform the same steps as above. Together, this proves the first statement of part b).
		
	For part a), we have $z \in \bd \Omega$ and hence $\Omega \cap B_r(z) = B^>_r(z)$. So, the argument used above shows that $u$ satisfies the boundary condition as $\BV$-trace.
			
	The first statement of part b) implies that the restriction $u|_{\Gamma}$ is well-defined. When parametrizing $\Gamma$ regularly by $\xi_{\lambda}(s) := \xi(\lambda,s)$, we obtain 
	\begin{align*}
		\xi_{\lambda}^*u(s) &= u \circ \xi(\lambda,s) = v(\lambda,s) = \gamma^*u_0(s) + v_2(\lambda,s)  & s &\; \in \R \;.
	\end{align*}
	Since $v_2(\lambda,.)$ is a periodic $\C^1$ function, while $\gamma^*u_0$ is a periodic $\BV$ function, the sum  $\xi_{\lambda}^*u$ 
	is a periodic $\BV$ function, and  consequently $u|_{\Gamma} \in \BV(\Gamma)$. This proves the second statement of part b).
		
	Finally, for part c), $z$ belongs to the stop set, $z \in \inner{\Sigma}_k$. 
	In this case, we cannot use $\xi$ because its Jacobian is singular on the stop set. But, as $\xi(1,s)$ for $s \in J_{k,+}$ regularly parametrizes $\inner{\Sigma}_k$
	(see corollary \ref{Cor:PolarCoord} d)), one can use the local diffeomorphism induced by
	\begin{align*}
		y' &= -c(y) & y(0,s) &= \xi(1,s) \;, \;  s \in J_{k,+} \;.
	\end{align*}
	Then, the same considerations as before yield the assertion. \hfill
\end{proof}
\medskip

Remark: part b) of lemma \ref{Lem:SRS} is called ''restart'' because having stopped the characteristics at some intermediate $\lambda$-level $\Gamma$ of $T_0$, 
the restarted problem
\begin{align*}
	\SP< {c(x)}, {Dw} > &= f(x) \cdot \Lm^2 \quad \mbox{ in } \quad \chi_{T_0 > \lambda} \wo \Sigma  \;,\\
	w|_{ \Gamma} &= u|_{\Gamma} \;.
\end{align*}
is of the same type as the initial one. This is, because lemma \ref{Lem:SRS} b) ensures that $u|_{\Gamma} \in \BV(\Gamma)$.
Moreover, when applying the method of characteristics here, then $w$ reproduces $u$ by $w = u|_{\chi_{T_0 > \lambda}}$.

\section{Uniqueness and Continuous Dependence}\label{Sect:Stab}
We have shown in the previous section that problem \refEq{eqn:LinProblem} has a solution in $\BV(\Omega)$. Its uniqueness can be derived as in the $\C^1$ case:
by linearity, the solution is unique if the homogenous problem
\[
	\SP< {c(x)}, {Du}> = 0 \;, \quad u|_{\bd \Omega} = 0
\] 
has a unique solution. With $v(t,s) = u \circ \xi(t,s)$ the homogeneous problem in characteristic variables reads as 
\[
	D_t v = 0 \;, \quad v(0,s) = 0 \;.
\]
Consequently, $v$ must be constant w.r.t. $t$ and is thus determined by the boundary condition. The latter implies that the unique solution of the homogeneous problem is zero. 

As the unique solution $u$, given by equation \refEq{eqn:SolOrig}, depends linearly on the boundary data $u_0$ and the right-hand side$f$, 
the continuous dependence w.r.t. $u_0$ and $f$ is already contained in theorem \ref{Theo:ElemBV}.
What remains to be shown is the continuous dependence of the solution on the transport field $c$ and the time function $T$.
\medskip

\begin{theo} \label{Theo:ContDepend}
	(Continuous dependence on $c$ and $T$)
	Let $(c_n)_{n \in \N}$ be a sequence of transport fields and let $c$ be a transport field. Let $(T_n)_{n \in \N}$ be a sequence of time functions and let $T$ be a time function.
	Then, consider the family of linear problems
	\begin{align*}
		\SP< {c_n(x)}, {Du_n} > &= f(x) \cdot \Lm^2 \;, & u_n|_{\bd \Omega} &= u_0 \;, &  \SP< {c_n(x)}, {T_n(x)} > &\geq \beta \cdot |\nabla T_n(x)| \;,\\
		\SP<{c(x)} ,{Du}> &= f(x) \cdot \Lm^2 \;, & u|_{\bd \Omega} &= u_0 \;, &  \SP< {c(x)}, {T(x)} > &\geq \beta \cdot |\nabla T(x)| \;,
	\end{align*}
	in $\Omega \wo \Sigma$. If
	\medskip
	
	\begin{enumerate}[1.] 
	\item	the sequence $(c_n)_{n \in \N}$ converges uniformly to $c$: $\|c_n - c\|_{\infty} \to 0$,
		\medskip
	\item the sequence of derivatives $(Dc_n)_{n \in \N}$ is bounded: $\|Dc_n\|_{\L^1(\Omega)}  \leq M_1$ for all $n \in \N$,
		\medskip
	\item the sequence $(T_n)_{n \in \N}$ converges uniformly to $T$: $\|T_n - T\|_{\infty} \to 0$, 
		\medskip
	\item the sequence $(\nabla T_n)_{n \in \N}$ converges uniformly to $\nabla T$: $\| \nabla T_n - \nabla T\|_{\infty} \to 0$, 
		\medskip
	\item the sequence of derivatives $(DN_n)_{n \in \N}$ is bounded: $\|DN_n\|_{\L^1(\Omega)}  \leq M_2$ for all $n \in \N$, ($N_n$ is the field of normals corresponding to $T_n$)
		\medskip
	\end{enumerate}
	then the sequence of solutions $u_n$ converges $\BV$ weakly* to $u$:
	\[
		u_n \weaksto u \quad \mbox{in} \quad \BV(\Omega) \;, \quad \mbox{as} \quad n \to \infty \; .
	\]
\end{theo}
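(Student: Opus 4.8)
The plan is to use the explicit representation of the solution provided by formula \refEq{eqn:SolOrig}, combined with the compactness of bounded sets in $\BV(\Omega)$, to extract a weakly* convergent subsequence, and then to identify every limit with the unique solution $u$, which by uniqueness forces the whole sequence to converge. First I would note that the hypotheses 1.--5. are precisely what is needed to make the constant $M_{\Omega\wo\Sigma}$ in theorem \ref{Theo:ElemBV} uniform in $n$: the uniform lower bound $\beta$ is fixed, $\|f\|_\infty$ and $|Du_0|(\bd\Omega)$ do not depend on $n$, the quantities $m_0 = m_0(T_n)$ are bounded below uniformly by an inspection of the proof of lemma \ref{Lem:ArcLenBound}~a) using only $q$, $\|T_n\|_\infty\le$ const and $\|\nabla T_n\|_\infty$ bounded (which follows from hypothesis 4.), and $\|Dc_n\|_{\L^1}$, $\|DN_n\|_{\L^1}$ are bounded by $M_1$, $M_2$. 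Hence by theorem \ref{Theo:ElemBV}~b) the sequence $(u_n)$ is bounded in $\BV(\Omega)\cap\L^\infty(\Omega)$, so by the compactness theorem for $\BV$ every subsequence has a further subsequence converging strongly in $\L^1(\Omega)$ and weakly* in $\BV(\Omega)$ to some $\tilde u\in\BV(\Omega)$.

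The heart of the argument is then to show that any such limit $\tilde u$ equals $u$. For this I would pass to the limit in the characteristic representation. Denote by $\xi_n$, $\eta_n$, $T_{0,n}$ the customized coordinate objects associated to $(c_n,T_n)$ via section \ref{Sect:Coord}, and by $\xi$, $\eta$, $T_0$ those for $(c,T)$. The key claim is that $T_{0,n}\to T_0$ uniformly (immediate from $\varphi(t)=-t^{1/q}$ being the same transformation and hypothesis 3.), and that the backward characteristic flows converge: $\eta_n(t,x)\to\eta(t,x)$ locally uniformly. This last point follows from continuous dependence of ODE solutions on the vector field, since $c_{0,n}=c_n/\SP<{c_n},{\nabla T_{0,n}}>$ converges uniformly to $c_0=c/\SP<{c},{\nabla T_0}>$ on sets bounded away from $\Sigma$ — here hypotheses 1., 3., 4. and the uniform bound $\SP<{c_n},{\nabla T_{0,n}}>\ge m_0\beta$ are exactly what is used. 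Feeding these convergences into \refEq{eqn:SolOrig},
\[
	u_n(x) = u_0(\eta_n(T_{0,n}(x),x)) + \int_0^{T_{0,n}(x)} f_0^{(n)}\circ\eta_n(\tau,x)\,d\tau ,
\]
the second (integral) term converges pointwise a.e.\ on $\Omega\wo\Sigma$ to the corresponding term for $u$ (dominated convergence, since $f_0^{(n)}$ is bounded by $\|f\|_\infty/(\beta m_0)$ uniformly and the integrand is continuous in the data), and it converges in $\L^1(\Omega)$ by dominated convergence with the uniform $\L^\infty$ bound. For the first (boundary) term one exploits that $u_0\in\BV(\bd\Omega)$ is continuous at a.e.\ point of $\bd\Omega$; since the footpoints $\gamma^{-1}(\eta_n(T_{0,n}(x),x))$ converge to $\gamma^{-1}(\eta(T_0(x),x))$ for a.e.\ $x$, one gets $u_1^{(n)}(x)\to u_1(x)$ at a.e.\ $x$ for which the limiting footpoint is a Lebesgue point of $u_0$, hence a.e.\ on $\Omega$. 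Bounded convergence then gives $u_n\to u$ in $\L^1(\Omega)$. Combining with the $\L^1$-limit $\tilde u$ of the subsequence yields $\tilde u = u$.

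Since the limit is the same for every convergent subsequence and $(u_n)$ is bounded in $\BV(\Omega)$, a standard subsequence argument gives $u_n\weaksto u$ in $\BV(\Omega)$: indeed $u_n\to u$ in $\L^1(\Omega)$ together with $\sup_n|Du_n|(\Omega)<\infty$ is exactly the definition of weak* convergence in $\BV$. The main obstacle I anticipate is the careful verification that the footpoint maps converge \emph{at almost every} $x$ and that the exceptional null set for $u_0$ pulls back to a null set under these maps — this requires knowing that the characteristic coordinate change has a nonvanishing Jacobian (corollary \ref{Cor:PolarCoord}~c)) so that preimages of $\Hm^1$-null subsets of $\bd\Omega$ are $\Lm^2$-null, and a uniform-in-$n$ version of that non-degeneracy, which again reduces to the uniform bound $\beta$ in \refEq{eqn:RelDet}. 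A secondary technical point is handling the behavior near the singular nodes of $\Sigma$, where the flows degenerate; but since $\Sigma$ is $\Lm^2$-null and all estimates away from $\Sigma$ are uniform, this contributes nothing to the $\L^1(\Omega)$ convergence.
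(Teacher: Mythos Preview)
Your proof is correct and follows the same overall strategy as the paper: establish $\L^1(\Omega)$ convergence of $u_n$ to $u$ via the explicit characteristic representation \refEq{eqn:SolOrig}, then upgrade to weak* convergence in $\BV(\Omega)$ using the uniform total-variation bound from theorem \ref{Theo:ElemBV}. The core step --- convergence of the backward flows $\eta_n \to \eta$ via a Gronwall argument on sets bounded away from $\Sigma$ --- is identical in both.

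Two differences are worth noting. First, the paper reduces to $f=0$ by linearity and restricts explicitly to a lower level set $\Omega_\lambda = \{T_0 \le \lambda\}$; this cleanly handles the fact that the stop sets $\Sigma_n$ associated to the $T_n$ may differ from $\Sigma$ (so that $\eta_n(\cdot,x)$ is a priori undefined for $x$ near $\Sigma_n$), a point you acknowledge only informally. Second, and more interestingly, your treatment of the boundary term $u_0 \circ \eta_n(T_{n,0}(x),x)$ differs from the paper's. The paper approximates $u_0$ by smooth functions in $\L^\infty(\bd\Omega)$ and runs a three-$\varepsilon$ argument; you instead exploit that a one-variable $\BV$ function is continuous outside a countable set, so that the footpoint map pulls the discontinuity set back to a countable union of characteristic curves, hence an $\Lm^2$-null set via corollary \ref{Cor:PolarCoord}~c). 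Your route here is arguably more robust, since uniform approximation of a $\BV$ function with genuine jumps by continuous functions is not available. Finally, the subsequence/compactness framing you open with is harmless but redundant: your pointwise-a.e.\ argument already delivers full-sequence $\L^1$ convergence, as you effectively concede in the last paragraph; the paper proceeds directly without invoking compactness.
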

\begin{proof} To begin with, we show the $\L^1$ convergence of $u_n$ to $u$. Since the solution linearly depends on the right-hand side $f$, it is not restrictive to set $f = 0$.
	Again, we refer to the scaled PDEs
	\begin{align*}
		\SP< {c_{n,0}(x)}, {Du_n} > &= 0  & u_n|_{\bd \Omega} &= u_0 \;, \\
		\SP<{c_0(x)} ,{Du}> &= 0  & u|_{\bd \Omega} &= u_0 \;,
	\end{align*}
	the first PDE scaled with $1/\SP<{c_n},{\nabla T_{n,0}} >$, the second one scaled with $1/\SP<{c},{\nabla T_0} >$. Thereby $T_{n,0}$ and $T_0$ denote the transformed versions of $T_n$ and $T$
	according to equation \refEq{eqn:TrafoT}.
	
	In the following, $\eta_n$ denote the backward characteristics associated with the transport fields $c_{n,0}$.
	Likewise, $\eta$ denotes the characteristics corresponding to $c_0$. In accordance with equation \refEq{eqn:SolOrig}, the solutions are
	\begin{align*}
		u_n(x) &= u_0( \eta_n(T_{n,0}(x),x)) \;,& u(x) &= u_0( \eta(T_0(x),x))  \;.
	\end{align*}
	
	We split the proof into seven steps. First step: we consider a lower level set $\Omega_{\lambda} := \{ x \in \Omega : T_0(x) \leq \lambda\}$ of $T_0$ such that 
	$2 \|u_0\|_{\L^{\infty}(\bd \Omega)} \cdot \Lm^2(\Omega \wo \Omega_\lambda) \leq \varepsilon$. Then, we can restrict the discussion to $\Omega_\lambda$:
	\[
		\|u_n - u\|_{\L^1(\Omega)} \leq \|u_n - u\|_{\L^1(\Omega_\lambda)} + 2 \|u_0\|_{\L^{\infty}(\bd \Omega)} \cdot \Lm^2(\Omega \wo \Omega_\lambda) \leq \|u_n - u\|_{\L^1(\Omega_\lambda)} + \varepsilon \;.
	\]
	
	Second step: we assume temporarily that the boundary data is continuous. With
	\[
		\|u_n - u\|_{\L^1(\Omega_\lambda)} = \int\limits_{\Omega_\lambda} |u_0( \eta_n(T_{n,0}(x),x)) - u_0( \eta(T_0(x),x))| \; dx \;,
	\]
	we will get the assertion by dominated convergence if $\eta_n(T_{n,0}(x),x) \to \eta(T_0(x),x)$ pointwise for $x \in \Omega_\lambda$.
	
	Third step: we choose $n_0$ such that for all $n \geq n_0$ the stop sets $\Sigma_n$ -- which correspond to the time functions $T_n$ -- are all compactly contained in $\Omega \wo \Omega_\lambda$
	which is possible because $T_n \to T$ uniformly. Now, we are sure that, for $n \geq n_0$ and $x \in \Omega_\lambda$, the point $x$ cannot belong to some singular set $\Sigma_n$ 
	and thus all curves $\eta_n(\dotarg,x)$ are well-defined.
	
	Fourth step: we need to estimate the difference $\eta_n(\dotarg ,x) - \eta(\dotarg ,x)$ of the backward characteristics.
	Let $x \in \Omega_\lambda$. The derivative of the difference $\eta_n(\dotarg ,x) - \eta(\dotarg ,x)$ obviously satisfies the IVP
	\[
		(\eta_n - \eta)' = c_0(\eta) - c_{n,0}(\eta_n)  \quad,\quad (\eta_n - \eta)(0 , x) = 0 \;,
	\]
	and hence, integration yields
	\[
		(\eta_n - \eta)(t,x) = \int\limits_0^t  c_0(\eta(\tau , x)) - c_{n,0}(\eta_n(\tau , x)) \; d\tau \;.
	\]
	The latter equation is valid for $t \leq \min \{T_0(x) , T_{n,0}(x)\}$. The first estimate is then
	\begin{align*}
		|\eta_n - \eta|(t,x) \leq & \int\limits_0^t  |c_0(\eta(\tau , x)) - c_0(\eta_n(\tau , x))| 
		+   |c_0(\eta_n(\tau , x)) - c_{n,0}(\eta_n(\tau , x))| \; d\tau \;.
	\end{align*}
	According to requirement \ref{Req:TransportField} 3a) we get a bound $|Dc(x)| \leq M_{\lambda}$ on $\Omega_\lambda$.
	And a similar bound $|Dc_0(x)| \leq M'_{\lambda}$ holds for the transformed transport field $c_0$, which implies the existence of a Lipschitz constant $L_\lambda$
	on $\cl{\Omega}_\lambda$. For $0 \leq t \leq \min \{T_0(x) , T_{n,0}(x)\}$ all the points $\eta(t,x)$ and $\eta_n(t,x)$ are located in $\cl{\Omega}_\lambda$ and so the next estimate is
	\[
		|\eta_n - \eta|(t,x) \leq \int\limits_0^t  L_{\lambda} \cdot |\eta_n - \eta|(\tau,x) \; d\tau + t \|c_0 - c_{n,0} \|_{\L^\infty(\Omega_{\lambda})} \; ,
	\]
	and thus Gronwall's lemma (see \cite{Walter:DiffInEq}) yields
	\begin{equation}\label{eqn:Gron}
		\begin{aligned}
			|\eta_n - \eta|(t,x) \leq  t \cdot e^{t \cdot L_{\lambda}} \cdot & \|c_0 - c_{n,0} \|_{\L^\infty(\Omega_{\lambda})} 
			\leq C_{\lambda} \cdot \|c_0 - c_{n,0} \|_{\L^\infty(\Omega_{\lambda})} \;.
		\end{aligned}
	\end{equation}
	Because of $t \leq \min \{T_0(x) , T_{n,0}(x)\} \leq T_0(x) \leq \lambda$ for $x \in \Omega_\lambda$, the constant is $C_{\lambda} = \lambda \cdot e^{\lambda \cdot L_{\lambda}}$.
	Since we have three times uniform convergence, i.e., $T_n \to T$, $\nabla T_n \to \nabla T$,  and $c_n \to c$ we obtain uniform convergence of the transformed versions
	$T_{n,0} \to T_0$, $\nabla T_{n,0} \to \nabla T_0$,  and $c_{n,0} \to c_0$ at least on $\Omega_\lambda$. Consequently, we can make the difference $|\eta_n - \eta|(t,x)$ arbitrarily small.
		
	Fifth step: we estimate the difference $|\eta_n(T_{n,0}(x),x) - \eta(T_0(x),x)|$. For abbreviation we set $\tau = T_0(x)$ and $\tau_n = T_{n,0}(x)$. We want to reuse estimate \refEq{eqn:Gron},
	but this estimate is valid only for $t \leq \min \{\tau , \tau_n\}$. So, in the case of $\tau =\min \{\tau , \tau_n\}$ we proceed as
	\begin{align*}
		|\eta_n(\tau_n,x) - \eta(\tau,x)| & \leq |\eta_n(\tau_n,x) - \eta_n(\tau,x)| + |\eta_n(\tau,x) - \eta(\tau,x)| \\
		& \leq \|-c_{n,0} \circ \eta_n(\dotarg , x)\|_{\L^{\infty}([\tau,\tau_n])} \cdot |\tau-\tau_n| +  C_{\lambda} \cdot \|c_0 - c_{n,0} \|_{\L^\infty(\Omega_{\lambda})} \\
		& \leq \frac{|\tau-\tau_n|}{\beta \cdot m_{n,0}} + C_{\lambda} \cdot \|c_0 - c_{n,0} \|_{\L^\infty(\Omega_{\lambda})} \;,
	\end{align*}
	and in the case of $\tau_n =\min \{\tau , \tau_n\}$ we supplement the other way round
	\begin{align*}
		|\eta_n(\tau_n,x) - \eta(\tau,x)| & \leq |\eta_n(\tau_n,x) - \eta(\tau_n,x)| + |\eta(\tau_n,x) - \eta(\tau,x)| \\
		& \leq C_{\lambda} \cdot \|c_0 - c_{n,0} \|_{\L^\infty(\Omega_{\lambda})} + \|-c_{0} \circ \eta(\dotarg , x)\|_{\L^{\infty}([\tau_n,\tau])} \cdot |\tau-\tau_n| \\
		& \leq C_{\lambda} \cdot \|c_0 - c_{n,0} \|_{\L^\infty(\Omega_{\lambda})} + \frac{|\tau-\tau_n|}{\beta \cdot m_{0}} \;.
	\end{align*}
	For each $n$, the value $m_{n,0} > 0$ denotes the minimum of $|\nabla T_{n,0}|$ according to lemma \ref{Lem:ArcLenBound} part a), 
	and because of $|\nabla T_{n,0}| \to |\nabla T_0|$ locally uniformly we also have $m_{n,0} \to m_0$.
	So, we have everything to conclude  $|\eta_n(T_{n,0}(x),x) - \eta(T_0(x),x)| \to 0$ and back to the second step we infer $\|u_n - u\|_{\L^1(\Omega_\lambda)} \to 0$. 
	
	Sixth step: in the case of general $\BV$ boundary data we approximate $u_0$ w.r.t. $\|.\|_{\L^{\infty}(\bd \Omega)}$ by a sequence $(u_0^m)_{m \in \N}$ of smooth functions.
	The difference of the solutions to 
	\begin{align*}
		\SP< {c(x)}, {Du^m} > &= f(x) \cdot \Lm^2 \;, & u^m|_{\bd \Omega} &= u_0^m \;, &  \SP< {c(x)}, {T(x)} > &\geq \beta \cdot |\nabla T(x)| \;,\\
		\SP<{c(x)} ,{Du}> &= f(x) \cdot \Lm^2 \;, & u|_{\bd \Omega} &= u_0 \;, &  \SP< {c(x)}, {T(x)} > &\geq \beta \cdot |\nabla T(x)| \;,
	\end{align*}
	is exactly $\|u - u^m\|_{\L^{\infty}(\Omega)} = \|u_0 - u_0^m\|_{\L^{\infty}(\bd \Omega)}$. The same way, replacing $c$ with $c_n$ and $T$ with $T_n$, we obtain 
	$\|u_n - u_n^m\|_{\L^{\infty}(\Omega)} = \|u_0 - u_0^m\|_{\L^{\infty}(\bd \Omega)}$. 
	After having fixed an $m$ such big that $2\|u_0 - u_0^m\|_{\L^{\infty}(\bd \Omega)} \cdot \Lm^2(\Omega) \leq \varepsilon$, we decompose
	\[
		\|u-u_n\|_{\L^1(\Omega)} \leq 2\|u_0 - u_0^m\|_{\L^{\infty}(\bd \Omega)} \cdot \Lm^2(\Omega) + \|u^m-u^m_n\|_{\L^1(\Omega)} \leq \varepsilon + \|u^m-u^m_n\|_{\L^1(\Omega)}
	\]
	and apply the previous steps one up to five to estimate remainder $\|u^m-u^m_n\|_{\L^1(\Omega)}$.
	
	Final step: all the functions $u$, $u_n$ are elements of $\BV(\Omega)$. By the uniform bounds on the derivatives $\|Dc_n\|_{\L^1(\Omega)} \leq M_1$, $\|DN_n\|_{\L^1(\Omega)} \leq M_2$,
	and the boundedness of the sequence $m_{n,0}$,	the sequence of total variations $|Du_n|(\Omega)$ is bounded. 
	This is a consequence of theorem \ref{Theo:ElemBV}, where the bounds $\|Dc_n\|_{\L^1(\Omega)} \leq M_1$, $\|DN_n\|_{\L^1(\Omega)} \leq M_2$
	are necessary in the case of non-trivial right-hand sides $f$.
	By \cite[proposition 3.13]{AmbrosioBV}, the boundedness of the sequence $|Du_n|(\Omega)$and the $\L^1$ convergence $u_n \to u$ together imply the $\BV$ weak* convergence of $u_n$ to $u$. \hfill		
\end{proof}
\medskip

The well-posedness of the linear problem \refEq{eqn:LinProblem} is complete: we have shown the existence of a unique solution in $\BV(\Omega)$ and its continuous dependence on all of the data.
Now, we discuss the quasi-linear problem.

\section{The Quasi-Linear Problem}\label{Sect:QuasiLin}
The subject of this section is to proof the existence of a solution to the quasi-linear problem 
\begin{equation}\label{eqn:QuasiLinProblem}
	\begin{aligned}
		 \SP< {c[u](x)}, {Du} > &= f[u](x) \cdot \Lm^2 \quad \mbox{ in } \quad \Omega \wo \Sigma  \;, \\
		 u|_{\bd \Omega} &= u_0   \;, \\  
		 \SP< {c[u](x)}, {\nabla T(x)} > & \geq \beta \cdot |\nabla T(x)|   \;,
	\end{aligned}
\end{equation}
in $\BV(\Omega)$. Here, we allow the dependencies of $c$ and $f$ on the solution $u$ to be of a general functional type.

In the introduction we have pointed out, that we achieve this goal by fixed point theory applied to the operator $U[v]$, which denotes the solution operator to the family of linear problems
\begin{equation}\label{eqn:LinOfQuasiLin}
	\begin{aligned}
		 \SP< {c[v](x)}, {Du} > &= f[v](x) \cdot \Lm^2 \quad \mbox{ in } \quad \Omega \wo \Sigma  \;, \quad v \in \FX \subset \BV(\Omega)\\
		 u|_{\bd \Omega} &= u_0   \;, \\  
		 \SP< {c[v](x)}, {\nabla T(x)} > & \geq \beta \cdot |\nabla T(x)|   \;.
	\end{aligned}
\end{equation}
To be able to pursue this strategy, we have to extend the list of requirements on the coefficients $c$ and $f$ by assumptions concerning the functional argument $v$.
\medskip

\begin{req}\label{Req:TransportFieldFunc}
	(Transport fields)
	Regarding the functional argument, transport fields are maps of type
	\begin{align*}
		c: \L^1(\Omega) & \to \C^1(\Omega \wo \Sigma)^2 \;, & \mbox{with} & & c[\dotarg](x): \L^1(\Omega) &\to \R^2 \;,
	\end{align*}
	and we assume them to satisfy:
	\smallskip
	
	\begin{enumerate}[1.]
		\item For fixed $v \in \L^1(\Omega)$ the function $c[v]: \Omega \wo \Sigma \to \R^2 $ is a transport field according to requirement \ref{Req:TransportField}.
			\smallskip
		\item Uniformity of the unit speed and causality condition:
			\smallskip
			\begin{enumerate}[a)]
				\item $|c[v](x)| = 1$ for all $x \in \Omega \wo \Sigma$ and for all $v \in \L^1(\Omega)$.
					\smallskip
				\item There is a uniform lower bound $\beta > 0$ such that
					\begin{align*}
						\beta \leq \SP< {c[v](x)}, {N(x)} >  &\leq 1 \qquad \forall \; x \in \cl{\Omega} \wo \Sigma 
						& \mbox{and}\quad &\; \forall \;  v \in \L^1(\Omega) \;.
					\end{align*}
				\item Both conditions hold for the one-sided limits of $c[v]$ on the relatively open $\C^1$ arcs $\inner{\Sigma}_k$ of $\Sigma$.
			\end{enumerate}
			\smallskip
		\item Bounds and continuity:
			\smallskip
			\begin{enumerate}[a)]
				\item The map $D_x c: \L^1(\Omega) \to \C(\Omega \wo \Sigma)^{2 \times 2}$ -- the derivative of $c[v]$ w.r.t. the variable $x$ -- 
					is $\L^1$ bounded by 
					\[
						\|D_x c[v]\|_{L^1(\Omega)} < M_1 \qquad \forall \; v \in \L^1(\Omega) \;.
					\] 
				\item $c$ is continuous in the following manner:  if $v \in \L^1(\Omega)$ and $(v_n)_{n \in \N}$ is a sequence in $\L^1(\Omega)$ with $\|v-v_n\|_{L^1(\Omega)} \to 0$, 
					then the sequence $(c[v_n])_{n \in \N}$ converges uniformly to $c[v]$, i.e., $\|c[v]-c[v_n]\|_{\infty} \to 0$.
			\end{enumerate}
	\end{enumerate}
\end{req}
\bigskip

\begin{req}\label{Req:RHSFunc}
	(Right-hand sides)
	Regarding the functional argument, right-hand sides are maps of type
	\begin{align*}
		f: \L^1(\Omega) & \to \C^1(\cl{\Omega}) \;, & \mbox{with} & & f[\dotarg](x): \L^1(\Omega) &\to \R \;,
	\end{align*}
	and we assume them to satisfy:
	\smallskip
	\begin{enumerate}[a)]
		\item The map $f$ is bounded by 
			\[
				 \|f[v]\|_{\infty} \leq M_2 \qquad \forall \; v \in \L^1(\Omega) \;.
			\] 
		\item The map $\nabla_x f: \L^1(\Omega) \to \C(\cl{\Omega})^2$ -- the derivative of $f[v]$ w.r.t. the variable $x$ -- is bounded by
			\[
				 \|\nabla_x f[v]\|_{\infty} \leq M_3 \qquad \forall \; v \in \L^1(\Omega) \;.
			\]
		\item $f$ is continuous in the following manner:  if $v \in \L^1(\Omega)$ and $(v_n)_{n \in \N}$ is a sequence in $\L^1(\Omega)$ with $\|v-v_n\|_{L^1(\Omega)} \to 0$, 
			then the sequence $(f[v_n])_{n \in \N}$ converges uniformly to $f[v]$, i.e., $\|f[v]-f[v_n]\|_{\infty} \to 0$.
	\end{enumerate}
\end{req}
\medskip

In addition, we define the subsets of $\BV(\bd \Omega)$ and $\BV(\Omega)$ with which we will work later on.
\medskip

\begin{defi}\label{Def:FuncDomains}
	Let $M_1$, $M_2$, $M_3$ be the bounds from the requirements stated above. 
	\begin{enumerate}[a)]
		\item We denote by
			\[
				\FB = \FB(\bd \Omega) := \{v \in \BV(\bd \Omega) : \|v\|_{\L^{\infty}(\bd \Omega)} \leq M_4 \;,\; |Dv|(\bd \Omega) \leq M_5\}
			\]
			the set of boundary functions.
		\item Let the constants $M_{*} \in \R$ and $M_{**} \in \R$ be given by
			\begin{equation}\label{eqn:SelfMapBound}
			\begin{aligned}
				M_{*}  := &\left(M_4 + \frac{M_2}{\beta \cdot m_0}\right) \cdot \Lm^2(\Omega) \;, \\
				M_{**} := & \; 2 \cdot \left(M_4 + \frac{M_2}{\beta \cdot m_0}\right) \cdot \Hm^1(\Sigma) + \frac{M_5}{\beta \cdot m_0}  
				+ \left( \frac{M_2}{\beta} + \frac{M_3}{\beta^2 \cdot m_0}\right) \cdot \Lm^2(\Omega) \\
				& + \frac{M_2}{\beta^3 \cdot m_0} \cdot \left( M_1 + \|DN\|_{L^1(\Omega)}\right) \;.
			\end{aligned}
			\end{equation}
			We set
			\[
				\FX = \FX(\Omega) := \{ v \in \BV(\Omega) : \|v\|_{L^1(\Omega)} \leq M_{*} \; ,\; |Dv|(\Omega) \leq M_{**}\} \;.
			\]
	\end{enumerate}
\end{defi}
\medskip

Our aim is to view problem \refEq{eqn:QuasiLinProblem} as fixed point problem. The next corollary
justifies the change of our viewpoint.
\medskip

\begin{cor} \label{Cor:OpDefAndSelfMap}
	For fixed $v \in \L^1(\Omega)$, problem \refEq{eqn:LinOfQuasiLin}
	meets the requirements of the linear problem \refEq{eqn:LinProblem} and			
	its unique solution, which we denote by $U[v]$, defines an operator $U: \L^1(\Omega) \to \FX$.
	
	Moreover, after restriction to $\FX$, this operator is a self-mapping $U:\FX \to \FX$ .
\end{cor}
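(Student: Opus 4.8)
The plan is to reduce the assertion entirely to the linear theory of Sections \ref{Sect:LPExist} and \ref{Sect:Stab}, the only new work being to check that the uniform bounds collected in Requirements \ref{Req:TransportFieldFunc} and \ref{Req:RHSFunc} turn the a priori estimate of Theorem \ref{Theo:ElemBV} into a $v$-independent one.

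First I would verify that, for each fixed $v \in \L^1(\Omega)$, the data of \refEq{eqn:LinOfQuasiLin} meet the requirements of Section \ref{Sect:Problem}. The domain $\Omega$, the stop set $\Sigma$ and the time function $T$ are not affected by the functional argument, so Requirements \ref{Req:Domain}--\ref{Req:TimeFuncSigma} hold by hypothesis. By Requirement \ref{Req:TransportFieldFunc} part 1, $c[v]$ is a transport field in the sense of Requirement \ref{Req:TransportField}; in particular $c[v] \in \C^1(\Omega \wo \Sigma)^2$ and, by Requirement \ref{Req:TransportFieldFunc} part 2, $c[v]$ satisfies the causality condition with the \emph{same} $\beta$ relative to the fixed $T$. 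By Requirement \ref{Req:RHSFunc}, $f[v] \in \C^1(\cl{\Omega})$, and $u_0 \in \FB(\bd \Omega) \subset \BV(\bd \Omega)$, so Requirement \ref{Req:RHSandData} is met as well. Hence \refEq{eqn:LinOfQuasiLin} is an instance of the linear problem \refEq{eqn:LinProblem}: Theorem \ref{Theo:ElemBV} yields a solution in $\BV(\Omega) \cap \L^{\infty}(\Omega)$, and the uniqueness argument at the beginning of Section \ref{Sect:Stab} shows it is unique. Calling it $U[v]$, this already defines a map $U$ on all of $\L^1(\Omega)$; since $\BV(\Omega) \subset \L^1(\Omega)$, once the range is identified we may restrict $U$ to $\FX$.

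Next I would pin down the range. Fix $v \in \L^1(\Omega)$ and apply the two estimates of Theorem \ref{Theo:ElemBV} to $u := U[v]$, inserting the uniform bounds $\|f[v]\|_{\infty} \leq M_2$, $\|\nabla_x f[v]\|_{\infty} \leq M_3$ (Requirement \ref{Req:RHSFunc} a), b)), $\|D_x c[v]\|_{\L^1(\Omega)} \leq M_1$ (Requirement \ref{Req:TransportFieldFunc} part 3a)), together with $\|u_0\|_{\L^{\infty}(\bd \Omega)} \leq M_4$ and $|Du_0|(\bd \Omega) \leq M_5$ from the definition of $\FB$. The $\L^{\infty}$ bound of Theorem \ref{Theo:ElemBV} a) gives
\[
	\|U[v]\|_{\L^{\infty}(\Omega)} \leq M_4 + \frac{M_2}{\beta \cdot m_0} \;,
\]
so that $\|U[v]\|_{\L^1(\Omega)} \leq \|U[v]\|_{\L^{\infty}(\Omega)} \cdot \Lm^2(\Omega) \leq M_{*}$. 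For the total variation, the bound $|Du|(\Omega) \leq M_{\Omega \wo \Sigma} + 2\|u\|_{\L^{\infty}(\Omega)} \cdot \Hm^1(\Sigma)$ of Theorem \ref{Theo:ElemBV} b), after substituting the same uniform bounds into the expression for $M_{\Omega \wo \Sigma}$ and into $\|U[v]\|_{\L^{\infty}(\Omega)}$, reproduces term by term the right-hand side of \refEq{eqn:SelfMapBound}, i.e. $|DU[v]|(\Omega) \leq M_{**}$. Therefore $U[v] \in \FX$, which gives $U : \L^1(\Omega) \to \FX$ and, restricting the domain, the self-mapping $U : \FX \to \FX$.

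The argument is essentially bookkeeping, and the one point deserving care is exactly that the constants $M_{*}$ and $M_{**}$ in Definition \ref{Def:FuncDomains} were chosen so that the $v$-independent right-hand sides produced by Theorem \ref{Theo:ElemBV} stay below them; there is no genuine obstacle, since $m_0$ and the term $\|DN\|_{\L^1(\Omega)}$ occurring in those estimates depend only on the fixed time function $T$, and $\beta$ is uniform by Requirement \ref{Req:TransportFieldFunc} part 2.
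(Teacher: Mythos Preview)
Your proof is correct and follows essentially the same route as the paper's own argument: both reduce to the linear theory via Requirement \ref{Req:TransportFieldFunc} part 1 and Requirement \ref{Req:RHSFunc}, then feed the uniform bounds $M_1,\ldots,M_5$ into the a priori estimates of Theorem \ref{Theo:ElemBV} to obtain $\|U[v]\|_{\L^1(\Omega)} \leq M_{*}$ and $|DU[v]|(\Omega) \leq M_{**}$. Your explicit remark that $m_0$, $\|DN\|_{\L^1(\Omega)}$ and $\beta$ are $v$-independent (because $T$ is fixed and $\beta$ is uniform by Requirement \ref{Req:TransportFieldFunc} part 2) is a useful clarification that the paper leaves implicit.
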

\medskip

\begin{proof} 
	Let $v \in \L^1(\Omega)$ be arbitrary. By requirement \ref{Req:TransportFieldFunc} part 1 the field $c[v]$ is a transport field according to requirement \ref{Req:TransportField},
	while $f[v] \in \C^1(\cl{\Omega})$. Thus, the requirements of the linear problem \refEq{eqn:LinProblem} are satisfied and the linear theory
	ensures the existence of a unique solution $U[v] \in \BV(\Omega)$. Hence, the map $U: \L^1(\Omega) \to \BV(\Omega)$ is well-defined.
	
	By theorem \ref{Theo:ElemBV} we get the following estimates for fixed $v$: 
	\begin{align*}
		\|U[v]\|_{\L^{\infty}(\Omega)} \leq &  \|u_0\|_{\L^{\infty}(\bd \Omega)} +  \frac{ \|f[v]\|_{\infty}}{\beta \cdot m_0} \;,\\
		|DU[v]|(\Omega) \leq & \;  2 \cdot \|U[v]\|_{\L^{\infty}(\Omega)} \cdot \Hm^1(\Sigma) + \frac{|Du_0|(\bd \Omega)}{\beta \cdot m_0}  
		+ \left( \frac{\|f[v]\|_{\infty}}{\beta} + \frac{\|\nabla f[v] \|_{\infty}}{\beta^2 \cdot m_0}\right) \cdot \\ 
		& \Lm^2(\Omega) + \frac{\|f[v]\|_{\infty}}{\beta^3 \cdot m_0} \cdot \left( \|Dc[v]\|_{L^1(\Omega)} + \|DN\|_{L^1(\Omega)}\right) \;.
	\end{align*}
	Plugging into these estimates the uniform bounds $M_1$, $M_2$, $M_3$  from requirements \ref{Req:TransportFieldFunc} and \ref{Req:RHSFunc}  and the bounds $M_4$, $M_5$ on $u_0 \in \FB$,
	it is easy to see that we obtain the uniform upper bounds
	\begin{align*}
		\|U[v]\|_{\L^1(\Omega)} &\leq M_{*}  \;, & |DU[v]|(\Omega)& \leq M_{**} \;.
	\end{align*}
	Summarizing, the operator $U$ is in fact of type $	U:\L^1(\Omega) \to \FX$.
	
	Because of $\FX \subset \L^1(\Omega)$ we can restrict the domain of $U$ to $\FX$, and obtain the self-mapping $U:\FX\to \FX$. \hfill
\end{proof}
\medskip

The final ingredient that we need is the continuity of the operator $U$.
\medskip 

\begin{cor}\label{Lem:OpCont}(Continuity of $U$)
	The operator $U:\FX \to \FX$ from corollary \ref{Cor:OpDefAndSelfMap} b) is sequentially continuous w.r.t. the $\BV$ weak* topology.
\end{cor}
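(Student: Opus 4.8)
The plan is to deduce the statement from theorem \ref{Theo:ContDepend} together with the continuity requirements on the coefficients. Let $(v_n)_{n \in \N}$ be a sequence in $\FX$ converging $\BV$ weakly* to some $v$; by lower semicontinuity of the total variation (and continuity of the $\L^1$ norm under $\L^1$ convergence) the limit $v$ again lies in $\FX$, so this is the right notion of convergence to test. The first observation is that $\BV$ weak* convergence entails $\L^1$ convergence, $\|v - v_n\|_{\L^1(\Omega)} \to 0$, and hence --- by requirement \ref{Req:TransportFieldFunc} part 3b) and requirement \ref{Req:RHSFunc} c) --- uniform convergence of the coefficients, $\|c[v] - c[v_n]\|_{\infty} \to 0$ and $\|f[v] - f[v_n]\|_{\infty} \to 0$.

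Next I would split the difference $U[v_n] - U[v]$ by inserting the auxiliary solution $w_n \in \BV(\Omega)$ of the linear problem with transport field $c[v_n]$, right-hand side $f[v]$, boundary data $u_0$ and the fixed time function $T$; this problem meets the requirements of the linear problem by requirement \ref{Req:TransportFieldFunc} part 1 and requirement \ref{Req:RHSFunc}, so $w_n$ is well-defined. On the one hand, $U[v_n] - w_n$ solves --- by linearity in the right-hand side --- the linear problem with transport field $c[v_n]$, right-hand side $f[v_n] - f[v]$ and zero boundary data, so theorem \ref{Theo:ElemBV} a) gives $\|U[v_n] - w_n\|_{\L^{\infty}(\Omega)} \leq \|f[v_n] - f[v]\|_{\infty}/(\beta \cdot m_0) \to 0$, and a fortiori $\|U[v_n] - w_n\|_{\L^1(\Omega)} \to 0$. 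On the other hand, $w_n$ and $U[v]$ solve the same linear problem except that the transport field changes from $c[v_n]$ to $c[v]$; since $\|c[v_n] - c[v]\|_{\infty} \to 0$ is hypothesis 1 of theorem \ref{Theo:ContDepend}, $\|Dc[v_n]\|_{\L^1(\Omega)} \leq M_1$ is hypothesis 2 (by requirement \ref{Req:TransportFieldFunc} part 3a)), and hypotheses 3--5 hold trivially because the time function is the fixed $T$, with $N_n \equiv N$ and $\|DN\|_{\L^1(\Omega)} < \infty$ by requirement \ref{Req:TimeFuncSigma}, theorem \ref{Theo:ContDepend} applies and yields $w_n \weaksto U[v]$ in $\BV(\Omega)$, in particular $\|w_n - U[v]\|_{\L^1(\Omega)} \to 0$. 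Adding the two estimates gives $\|U[v_n] - U[v]\|_{\L^1(\Omega)} \to 0$.

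Finally, all the $U[v_n]$ lie in $\FX$ by corollary \ref{Cor:OpDefAndSelfMap}, so the total variations satisfy $|DU[v_n]|(\Omega) \leq M_{**}$ uniformly; combined with the $\L^1$ convergence just established, \cite[proposition 3.13]{AmbrosioBV} yields $U[v_n] \weaksto U[v]$ in $\BV(\Omega)$, which is precisely the asserted sequential continuity with respect to the $\BV$ weak* topology.

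I expect the only genuinely delicate point to be the bookkeeping needed to invoke theorem \ref{Theo:ContDepend}, which is stated for a \emph{fixed} right-hand side: one must first peel off the contribution of the varying $f[v_n]$ --- done above by linearity together with the $\L^{\infty}$ bound of theorem \ref{Theo:ElemBV} --- so that theorem \ref{Theo:ContDepend} is applied with $f = f[v]$ held fixed while only the transport field varies. Everything else is a direct transcription of the linear theory, using crucially that $T$ --- and therefore $N$, $m_0$ and $\|DN\|_{\L^1(\Omega)}$ --- does not depend on the functional argument.
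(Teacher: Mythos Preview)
Your proof is correct and follows essentially the same route as the paper: reduce $\BV$ weak* convergence to $\L^1$ convergence, invoke theorem \ref{Theo:ContDepend} with the constant time function $T_n \equiv T$, and upgrade back to weak* convergence via the uniform bound $|DU[v_n]|(\Omega) \leq M_{**}$ and \cite[proposition 3.13]{AmbrosioBV}. The only difference is cosmetic: the paper's proof is terse and simply writes ``as in section \ref{Sect:Stab}'' to cover both the $c$- and $f$-dependence (the continuous dependence on $f$ having been noted there as a direct consequence of theorem \ref{Theo:ElemBV}), whereas you make the splitting explicit by introducing the auxiliary $w_n$ with transport field $c[v_n]$ and fixed right-hand side $f[v]$. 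Your version is the cleaner one, since theorem \ref{Theo:ContDepend} is indeed stated only for a fixed $f$.
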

\medskip

\begin{proof}
	Let $(v_n)_{n \in \N}$ be a sequence in $\FX$ which tends to $v \in \FX$ w.r.t. the $\BV$ weak* topology. Then, we have in particular $\|v-v_n\|_{L^1(\Omega)} \to 0$.
	According to requirement \ref{Req:TransportFieldFunc}, the sequence $c_n := c[v_n]$ (with limit $c := c[v]$) and the constant sequence $T_n = T$ of time functions satisfy the requirements
	of \ref{Theo:ContDepend}. So as in section \ref{Sect:Stab}, we conclude $\|U[v_n] - U[v]\| \to 0$, and the weak* convergence $U[v_n] \weaksto U[v] $
	is a consequence of the boundedness of $ |DU[v]|(\Omega) \leq M_{**}$. \hfill
\end{proof}
\medskip

With the corollaries \ref{Cor:OpDefAndSelfMap} and \ref{Lem:OpCont}, we have sufficient conditions to conclude the existence of a solution to problem \refEq{eqn:QuasiLinProblem}.
By corollary \ref{Cor:OpDefAndSelfMap}, the quasi-linear problem is equivalent to the fixed point problem $u = U[u]$. 
The set $\FX$ from definition \ref{Def:FuncDomains}, is non-empty, convex and, by \cite[theorem 3.23]{AmbrosioBV} sequentially compact w.r.t. the $\BV$ weak* topology.
Finally, corollary \ref{Lem:OpCont} yields the sequential continuity of the map $U:\FX \to \FX$.
Hence, the existence of a fixed point $u = U[u]$ is the consequence of the Schauder-Tychonoff fixed point theorem (see \cite[chapter 9.3]{Zeidler:FPT}). 

So far we have shown the existence of a fixed point, but uniqueness cannot be expected as the following example illustrates.
\medskip

\paragraph{Example of non-uniqueness}
As before let $\Omega \wo \Sigma$ be the domain of the PDE and $T$ a time function. 
We consider an almost linear problem, where the transport field does not depend on $u$, with zero boundary condition
\begin{equation} \label{eqn:AlmostLin}
	\begin{aligned}
		 \SP< {c(x)}, {Du} > &= f[u](x) \cdot \Lm^2   \;,\quad & u|_{\bd \Omega} &= 0 \;,\quad \SP<{c(x)},{\nabla T(x)}> \geq \beta \cdot |\nabla T(x)|\;.
	\end{aligned}
\end{equation}
Furthermore, we choose a right-hand side of the form
\begin{equation} 
	f[v](x) = g\left(\, \|v\|_{\L^1(\Omega)} \,\right) \;,
\end{equation}
which is independent of $x$. Thereby $g:\R \to \R$ is a continuous bounded function so that $f$ satisfies requirements \ref{Req:RHSFunc}.

After having fixed the functional argument of $f$ in equation \refEq{eqn:AlmostLin}, the solution $U[v]$ is given by
\begin{equation}\label{eqn:SolAlmostLin}
	U[v](x) = \int\limits_0^{T_0(x)} f_0[v] \circ \eta(\tau ,x) \: d\tau \;, 
\end{equation}
according to equation \refEq{eqn:SolOrig}. Because the backward characteristic solves $\eta'(\dotarg,x) = -c_0 \circ \eta(\dotarg,x)$ and $c$ is normed, we have
\[
	|\eta'(\dotarg,x)| = |c_0 \circ \eta(\dotarg,x)| = \left| \frac{c}{\SP<{c},{\nabla T_0}>} \circ \eta(\dotarg,x) \right| = \frac{1}{\SP<{ c},{\nabla T_0} >} \circ \eta(\dotarg,x) \;.
\]
And because $f[v]$ does not depend on $x$, the integrand in equation \refEq{eqn:SolAlmostLin} reduces to
\[  
	f_0[v] \circ \eta(\dotarg,x) = f[v] \cdot \frac{1}{\SP<{ c},{\nabla T_0} >} \circ \eta(\dotarg,x) =  f[v] \cdot |\eta'(\dotarg,x) | \;.
\]
Thus, $U[v](x) = f[v] \cdot a(x)= g\left(\, \|v\|_{\L^1(\Omega)} \,\right) \cdot a(x)$ has separated variables and $a(x)$ is the arc length of the characteristic
$\eta(\dotarg,x)$ which connects $x=\eta(0,x)$ and the point $\eta(T_0(x),x)$ on the boundary $\bd \Omega$.

If now $u$ is a fixed point of $U$, it must satisfy
\begin{equation} \label{eqn:FPcond}
	u(x) =  g\left(\, \|u\|_{\L^1(\Omega)} \,\right) \cdot a(x)\;.
\end{equation}
Consequently, $u(x) = \alpha \cdot a(x)$ is a scalar multiple of the arc length function $a$. Plugging this into condition \refEq{eqn:FPcond}, we obtain by
\[
	\alpha \cdot a(x) = g\left( \alpha \cdot \|a\|_{\L^1(\Omega)} \right) \cdot a(x) \quad \Rightarrow \quad \alpha = g\left( \alpha \cdot \|a\|_{\L^1(\Omega)} \right) =: \tilde{g}(\alpha) 
\]
a scalar fixed point problem for $\tilde{g}$. Finally, if $\tilde{g}$ is a continuous bounded function with many fixed points $\alpha$, e.g.
\[
	\tilde{g}(t) =
	\begin{cases}
		-1 &, t \leq -1 \\
		t &, -1 < t \leq 1 \\
		1 &, 1 < t
	\end{cases} \;,
	\quad \mbox{with} \quad
	g(t) = \tilde{g}\left( \, \frac{t}{\|a\|_{\L^1(\Omega)}} \, \right),
\]
then the operator $U$ has as many fixed points as $\tilde{g}$.

\section{Conclusion}
We have shown in section \ref{Sect:LPExist} that the linear hyperbolic Dirichlet problem --
with interior outflow set $\Sigma$ and mere inflow boundary --
admits a solution in $\BV(\Omega)$ under the assumptions of section \ref{Sect:Problem}.
The crucial ingredient here was the knowledge of a time function $T$ w.r.t. which the transport field $c$ is causal.
Furthermore, we have shown in section \ref{Sect:Stab} that this solution is unique and depends continuously on all the data of the problem.

Certainly, one might ask why we prefer the space $\BV(\Omega)$, because, if the boundary data $u_0$ were $\C^1$, 
one could solve in $\C^1(\Omega \wo \Sigma)$ (this possibility is contained in our approach).
The advantage of working in $\BV(\Omega)$ is three-fold:
\begin{enumerate}[1.]
\item Our solution solves the problem in $\Omega \wo \Sigma$, but is in fact an element of $\BV(\Omega)$ according to theorem \ref{Theo:ElemBV} part b).
	Therewith we get a description of what happens to $u$ on $\Sigma$ in the language of geometric measure theory and can in this sense close the gap $\Sigma$.
\item The benefit of closing the gap is that the domain  $\FX \subset \BV(\Omega)$ of the operator $U$ from section \ref{Sect:QuasiLin} is compact w.r.t. the $\BV$ weak* topology.  
	This fact together with the continuous dependence allowed us to conclude the existence of a solution to the quasi-linear case by employing the Schauder fixed point theorem.
\item A further advantage is that we can allow for more general time functions $T$ and stop sets $\Sigma$. For example we can use a $T$ with a saddle node as shown in figure \ref{Fig:Saddle5}.
\begin{figure}%[H]
	\begin{center}
		\begin{tikzpicture}[scale=7]
			\colorlet{darkgreen}{green!80!black}
			
			% background
			\fill[gray] (0,0.15) rectangle (1,0.85);
			
			% foreground circles
			\filldraw[white] (0.375,0.5) circle (0.25cm);
			\filldraw[white] (0.625,0.5) circle (0.25cm);
			
			% boundary
			\draw[blue,thick, rounded corners=2pt] (0.875,0.5) arc (0:120:0.25cm)  
			arc (60:300:0.25cm) arc (240:360:0.25cm);
			
			% saddle level
			\filldraw[fill=white,draw=darkgreen,thick] (0.375,0.5) circle (0.125cm);
			\filldraw[fill=white,draw=darkgreen,thick] (0.625,0.5) ellipse (0.125cm and 0.1cm);
			\filldraw[darkgreen] (0.5,0.5) circle (0.075mm);
							
			% max left
			\filldraw[red] (0.375,0.5) circle (0.075mm);
			
			% max right
			\filldraw[red] (0.625,0.5) circle (0.075mm);
		\end{tikzpicture}
	\end{center}
	\caption{Three levels of a time function $T$ with a saddle node in the middle. The white area is the domain $\Omega$ with its boundary, which is the start level $T=0$, in blue.
		 The green line is the saddle level of $T$ and the green center dot is the saddle node of $T$. The red dots are the maximal level of $T$, i.e.,
		 the final stop set which is disjoint.}  
	\label{Fig:Saddle5}
\end{figure}
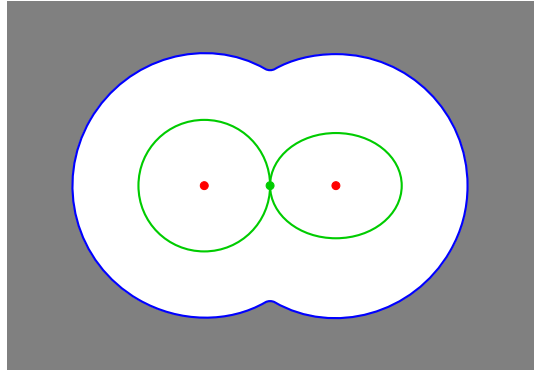
Here, in a first step, one solves from the boundary to the saddle level. The saddle node in the middle is the first stop set so to speak.
For the second step, we consider two decoupled problems on the remaining parts.
At the saddle node we will in general get a jump discontinuity even if the data on $\bd \Omega$ is smooth.
That means the problems on the remaining parts -- (re-)started on the saddle level -- are supplied with $\BV$-data. 
For more on this topic see \cite[chapter 5]{MyDiss}.
\smallskip
\end{enumerate}

In this paper we have restricted the discussion to two-dimensional domains and in the proofs of theorems \ref{Theo:ElemBV} and \ref{Theo:ContDepend}
for arguments concerning the boundary data we used strongly the fact that $\BV$ functions of one variable are essentially bounded.
In order to generalize to $d$-dimen\-sional domains, $d > 2$, with a $(d-1)$-dimensional boundary, one must require the boundary data to be $\BV(\bd \Omega) \cap \L^{\infty}(\bd \Omega)$.

\section{Acknowledgements}
The author would like to thank Folkmar Bornemann and Nick Trefethen for their advice and the inspiring discussions.

\bibliographystyle{siam}
\bibliography{DPaper1}

\end{document}